\title[Quot schemes in Grassmannians]{Quot schemes in Grassmannians}
\author{Roy Mikael Skjelnes}
\email{skjelnes@kth.se}
\keywords{Fitting ideals, Quot schemes, Grassmannian, Gotzmann Persistence Theorem, Macaulay Theorem, Castelnuovo-Mumford regularity}
\thanks{}
\DeclareMathOperator{\im}{Im}
\DeclareMathOperator{\Spec}{Spec}
\newcommand{\Fitt}{\mathscr{F}itt}
\newcommand{\schFitt}{\mathrm{Fitt}}
\newcommand{\ufitt}[2]{{\mathscr{Fitt}}^{#2}_{#1}}
\newcommand{\ra}{\longrightarrow}
\newcommand{\Proj}[1]{\mathbb{P}({#1})}
\newcommand{\Sym}{\operatorname{S}}
\newcommand{\eqbeg}{\begin{equation}}
\newcommand{\eqend}{\end{equation}}
\newcommand{\Sgot}{\mathfrak{S}}
\newcommand{\m}{\mathfrak{m}}
\newcommand{\call}{\mathscr{L}}
\newcommand{\calV}{\mathscr{V}}
\newcommand{\calE}{\mathscr{E}}
\newcommand{\calK}{\mathscr{R}}
\newcommand{\calS}{\mathscr{S}}
\newcommand{\calO}{\mathscr{O}}
\newcommand{\calG}{\mathscr{G}}
\newcommand{\calF}{\mathscr{F}}
\newcommand{\calR}{\mathscr{R}}
\newcommand{\Snbimap}{\xymatrix@M=1pt{ {\Sgot_n\times U}
\ar@<.5ex>[r] \ar@<-.5ex>[r] & {U}}}
\newcommand{\bimap}{\xymatrix@M=1pt{ {R}
\ar@<.5ex>[r] \ar@<-.5ex>[r] & {X}}}
\newcommand{\pbbimap}{\xymatrix@M=1pt{ {R\times_{A}Y}
\ar@<.5ex>[r] \ar@<-.5ex>[r] & {X\times_AY}}}
\newcommand{\grpbimap}{\xymatrix@M=1pt{ {G\times X}
\ar@<.5ex>[r] \ar@<-.5ex>[r] & {X}}}
\newcommand{\pibimap}{\xymatrix@M=1pt{ {R}
\ar@<.5ex>[r]^-{\pi_1} \ar@<-.5ex>[r]_-{\pi_2} & {X}}}
\newcommand{\quotmap}{\xymatrix@M=1pt{ {R/G}
\ar@<.5ex>[r]^-{\pi_1} \ar@<-.5ex>[r]_-{\pi_2} & {U/G}}}
\newtheorem{thm}[subsection]{Theorem}
\newtheorem{lemma}[subsection]{Lemma}
\newtheorem{cor}[subsection]{Corollary}
\newtheorem{prop}[subsection]{Proposition}
\theoremstyle{definition}
\newtheorem{ex}[subsection]{Example}
\theoremstyle{remark}
\newtheorem{rem}[subsection]{Remark}
\numberwithin{equation}{subsection}
\begin{document}
\begin{abstract}  We describe the closed strata that defines the Quot schemes $\mathrm{Quot}^n_{\calO^p/{\mathbb P}^r/S}$ as closed subschemes in Grassmannians. Here $n$ is the constant polynomial and $\calO^{p}$ is the free, rank $p$ sheaf on projective $r$-space ${\mathbb P}^r$ over some affine base scheme $S$.
\end{abstract}
\maketitle
\section{Introduction}

The Quot functors ${\mathrm{Quot}}^P_{\calF/{\mathbb P}^r/S}$ parametrizing quotient sheaves of a fixed coherent sheaf $\calF$ on a projective scheme ${\mathbb P}^r \ra S$ that are flat over the base and having Hilbert polynomial $P(t)$ on the fibers, are arguably one of the most fundamental objects in algebraic geometry. The Grassmannian  and the Hilbert functors being  special cases. 

One of the important properties of the Quot functor is that it is a projective scheme \cite{FGAIV}. The original proof showing this property is based on an abstract embedding into a suitable Grassmannian: There exists a regularity bound, that is an integer $d$ depending on the polynomial $P(t)$, such that when twisting any point of the Quot functor with $d$, and thereafter pushing the sheaves down to the base, one obtains a morphism from the Quot functor to the Grassmannian of rank $P(d)$ quotients of the global sections of $\calF(d)$. Using a stratification argument Grothendieck shows that the morphism describes the Quot functor as locally closed, hence representable, subset of the Grassmannian. Then by showing that the Quot functor satisfies the evaluative criteria for properness, he concludes that the Quot scheme is proper, and in particular closed in the Grassmannian. A more detailed exposition was given by Mumford \cite{Mumfordlectures}, and variations can be found in e.g. \cite{FGAexplained}, \cite{C-F&Kapranov}, \cite{AltmanKleiman}.

Embeddings of the special case with  Hilbert schemes have been investigated thoroughly, and in particular the Gotzmann Persistence Theorem \cite{Gotzmann} was a major break through leading to a description of the defining equations of the Hilbert scheme inside the Grassmannian. We refer the reader to  Appendix C in \cite{IarrobinoKleiman} and the references therein, for more information about the defining equations of the Hilbert scheme. More recent work are found in \cite{Extensors} and \cite{HaimanSturmfels}.


We will in this article describe the defining strata for a class of Quot functors, namely with the sheaf $\calF=\calO^{\oplus p}$ being the free sheaf of rank $p$, and the polynomial $P(t)=n$ being the constant polynomial. Our description is as follows. Let $f\colon {\mathbb P}^r \ra S$ be the structure morphism. Fix an integer $d\geq n$,  and let $G$ denote the Grassmannian of locally free rank $n$ quotients of $f_*\calF(d)$, and let 
$$\xymatrix{ 0\ar[r] & \calR_d \ar[r] & f_*\calF(d) \ar[r] & \calE_d \ar[r] & 0}$$
denote the universal sequence on $G$. The $\calO _G$-module $\calR_d$ determines naturally a $\calO_G [X_0, \ldots, X_r]$ graded subsheaf $\calR$ of the free rank $p$ graded module; the cone generated by $\calR_d$. And then also a graded quotient module $\calE$. The components $\calE_m$ of the graded quotient sheaf $\calE$ are coherent sheaves, and in particular their Fitting ideals determine closed strata on $G$. Let $\mathrm{Fitt}_{n}(\calE')\subseteq G$ denote the closed subscheme defined by the $n$'th Fitting ideal of the coherent sheaf $\calE'$.  Our result is that the Quot scheme 
$$ \mathrm{Quot}^n_{{\calO}^{\oplus p}/{\mathbb P}^r/S}=\bigcap_{t\geq 1}\mathrm{Fitt}_{n-1}(\calE_{d+t}).$$
 Our result depends on, besides the set up and properties of Fitting ideals, the vector space dimension estimates obtained by Gasharov \cite{Gasharov}. It is worthwhile to point out that not only do we describe the closed strata defining the Quot scheme, but we also prove,  in an essentially new way,  the representability and projectivity of the Quot functors with constant polynomial. 


As the spaces considered are Noetherian only a finite number of the subschemes $\mathrm{Fitt}_{n-1}(\calE_m)$ are needed to describe the Quot scheme. We do not have a general bound or estimate for the number of intersection components  needed, but for some special cases we have a complete answer.  In the case with the projective line ${\mathbb P}^1$ we prove that only the first component is needed, that is
$$ \mathrm{Quot}^n_{\calO^{\oplus p}/{\mathbb P}^1/S}=\mathrm{Fitt}_{n-1}(\calE_{d+1}).$$
Similarly, we need only a single Fitting ideal stratum in the case of the Hilbert functor, that is when the fixed free sheaf has rank $p=1$.
 Thus 
$$\mathrm{Hilb}^n_{{\mathbb P}^r/S}=\mathrm{Fitt}_{n-1}(\calE_{d+1}), $$
 which turns out to be a direct consequence of Gotzmann's Persistence Theorem. The Hilbert schemes were known to be given by determinantial equations (\cite{HaimanSturmfels}, \cite{IarrobinoKleiman}), but our description in terms of Fitting ideals appears to be new - even though closely related to existing descriptions.

We show furthermore that the single Fitting ideal condition is set theoretically enough to describe the Quot schemes; that is for the underlying topological spaces we have the equality 
$$\mid \mathrm{Quot}^n_{{\calO}^{\oplus p}/\mathbb{P}^r/S}\mid =\mid \mathrm{Fitt}_{n-1}(\calE_{d+1})\mid.$$
Interestingly enough, we know of no examples where the actual scheme structures are different. In fact, showing that the nilpotent elements needed to describe the Quot scheme from $\mathrm{Fitt}_{n-1}(\calE_{d+1})$ are not only nilpotent, but actually zero is equivalent with a  natural generalization of the Gotzmann Persistence Theorem to modules.

We end by computing some examples that we find interesting. First we redo some known examples of the Hilbert scheme of points and put these in the framework of the presentation given here. Thereafter we compute some new examples of the Quot scheme of the projective line.

\subsection*{Acknowledgments} The discussions that I had together with Greg Smith (2010/2011) and together with Runar Ile (2011/2012) during their respective sabbaticals, preceded the discoveries presented in this article.

\section{Fitting ideals}

We let $S=k[X_0, \ldots, X_r]$ denote the standard graded polynomial ring in the variables $X_0, \ldots, X_r$ over a field $k$. If $F$ is a $S$-module, we mean a ${\mathbb Z}$-graded $S$-module $F=\bigoplus _{m\in {\mathbb Z}}F_m$. 

\subsection{Macaulay representation} For any non-negative integer $n$ we have for any positive integer $d$, the $d$-th Macaulay representation
$$ n =\binom{m_d}{d}+\binom{m_{d-1}}{d-1}+\cdots +\binom {m_1}{1},$$
with $m_d> m_{d-1}> \cdots > m_1\geq 0$. The convention is that $\binom{m}{d}=0$ when $m< d$. We define
$$ n^{\langle d\rangle} =\binom{m_d+1}{d+1}+\binom{m_{d-1}+1}{d}+\cdots +\binom {m_1+1}{2}.$$

\begin{prop}\label{Gasharov}  Let $F=\bigoplus_{i=1}^pS$ be the free $S=k[X_0,\ldots, X_r]$-module of rank $p$. Let $R\subseteq F$  be a graded $S$-submodule, and let $E=\bigoplus_{m\geq 0}{E_m}$ denote the graded quotient. We fix a degree $d\geq 1$, and let $n=\mathrm{dim}_k(E_d)$.
\begin{enumerate}
\item We have that the vector space dimension $\mathrm{dim}E_{d+1}\leq n^{\langle d\rangle}$.
\item Assume that $\mathrm{dim}(E_{d+1})=n^{\langle d\rangle}$, and that $R$ is generated in degree $d$,  then we have the equality $\mathrm{dim}(E_{d+2})=(\mathrm{dim}E_{d+1})^{\langle d+1 \rangle}$.
\end{enumerate}
\end{prop}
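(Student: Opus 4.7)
My plan is to prove both parts by induction on the rank $p$ of the free module $F$. The base case $p=1$ reduces $R$ to a homogeneous ideal in $S$: part (1) then becomes Macaulay's bound on Hilbert functions of graded quotients of polynomial rings, and part (2) becomes the classical Gotzmann Persistence Theorem. Both are known, so the content of the argument lies in the inductive step.

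For the inductive step with $p\geq 2$, I would decompose $F = Se_1 \oplus F'$ with $F'\cong S^{p-1}$, set $R' = R \cap F'$, and let $I \subseteq S$ be the image of $R$ under projection onto the first summand. This yields a short exact sequence of graded modules
$$0 \longrightarrow F'/R' \longrightarrow E \longrightarrow S/I \longrightarrow 0,$$
so setting $a_m = \mathrm{dim}_k(S/I)_m$ and $b_m = \mathrm{dim}_k(F'/R')_m$ we have $a_m + b_m = \mathrm{dim}_k E_m$ for all $m$, and in particular $a_d + b_d = n$. To prove (1), I would combine Macaulay's theorem for $I$ with the inductive hypothesis for $R'\subseteq F'$ to obtain $\mathrm{dim}_k E_{d+1} \leq a_d^{\langle d\rangle} + b_d^{\langle d\rangle}$, and then invoke the combinatorial superadditivity $(a+b)^{\langle d\rangle} \geq a^{\langle d\rangle} + b^{\langle d\rangle}$ of the Macaulay function to close the inequality.

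For (2), the assumed equality $\mathrm{dim}_k E_{d+1} = n^{\langle d\rangle}$ forces every inequality in the chain above to be an equality, so $I$ and $R'$ individually saturate their Macaulay and inductive bounds, and the superadditivity is tight at level $d$. One checks directly that generation of $R$ in degree $d$ descends to $I$, but the analogous descent to $R' = R\cap F'$ fails in general, as a simple example in $p = 2$ with $R = \langle xe_1, ye_1 + e_2\rangle$ already shows. The cleanest route around this obstruction is to pass to a lex-segment submodule $L \subseteq F$ having the same Hilbert function as $R$ in degrees $d$ and $d+1$; the existence of such $L$ is extracted from the Macaulay-type analysis of (1), and the persistence conclusion for $L$ reduces to a combinatorial identity on Macaulay representations verified through Pascal-style binomial relations. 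This final combinatorial step, which is essentially the argument carried out by Gasharov in \cite{Gasharov}, is where I expect the main difficulty to lie; the descent problem for generation in degree $d$ is the obstacle that forces the lex-segment detour rather than a purely homological splitting.
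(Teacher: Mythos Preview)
The paper does not prove this proposition at all: its entire proof is the single sentence ``This is a special case of \cite[Theorem 4.2]{Gasharov}.'' There is no argument in the paper to compare yours against; the result is imported as a black box.

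Your sketch therefore goes further than the paper does. For part (1), the exact sequence $0\to F'/R'\to E\to S/I\to 0$ indeed yields $\dim E_{d+1}\le a_d^{\langle d\rangle}+b_d^{\langle d\rangle}$ by Macaulay plus induction, as you say. The closing step, however, rests on the superadditivity $(a+b)^{\langle d\rangle}\ge a^{\langle d\rangle}+b^{\langle d\rangle}$. This inequality is true, but establishing it directly from the Macaulay representation is a combinatorial exercise of roughly the same weight as the module Macaulay bound you are trying to prove; in effect the induction relocates the difficulty rather than removing it. Gasharov (and Hulett) sidestep this by working with lex-segment submodules of $F$ from the outset rather than splitting off a summand.

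For part (2) your own diagnosis is accurate: generation in degree $d$ does not descend to $R'=R\cap F'$, your example shows exactly why, and the lex-segment detour you propose \emph{is} Gasharov's argument. At that point your proposal and the paper's one-line citation converge on the same source.
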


\begin{proof} This is a special case of \cite[Theorem 4.2]{Gasharov}.
\end{proof}

\begin{rem} The first assertion above can be proven by reducing to the case with the initial module, and thereafter applying Macaulays theorem  \cite{Macaulay}, \cite[Theorem 4.2.10]{BH}.  We will in this note use both assertions in the Proposition, but not the slightly more general versions that appear in \cite{Gasharov}.
\end{rem}

\begin{rem}\label{constant expansion} We will be applying the above mentioned result for the constant polynomial case: Consider the situation with  $n\leq d$. We then have that
$$ n =\binom{d}{d}+\binom{d-1}{d-1}+\cdots +\binom{d-n+1}{d-n+1}.$$
From where it  follows that $n^{\langle d +s\rangle}=n$, for $s\geq 0$, as well.
\end{rem}

\subsection{Fitting ideals} Let $E$ be a finitely generated $A$-module. The $n$'th  Fitting ideal we denote by $\Fitt_n(E)\subseteq A$ \cite{Bourbaki4-7}, \cite{Northcott}. If we have a presentation of the $A$-module $E$ as the co-kernel of a map of free modules
\begin{equation}\label{fitting presentation}
\xymatrix{ \bigoplus_{\alpha}A \ar[r]^{\psi} & \bigoplus_{i=1}^NA\ar[r] & E\ar[r] & 0,}
\end{equation}
then the $(N-n)$-minors of $\psi$ generate the $n$'th Fitting ideal $\Fitt_n(E)$. We recall that $\Fitt_{-1}(E)=0$, and more generally that $\Fitt_n(E)=0$ if the matrix representing $\psi$ does not have $N-n$-minors.  Moreover, the determinant of the $0\times 0$ matrix is one, so when $n\geq N$ then $\Fitt_n(E)=A$.

\begin{prop}\label{Fitting} Let $(A, \m)$ be a local ring, and $E$ a finitely generated $A$-module. Then $E$ is free of rank $n$ if and only if the two following conditions are satisfied
\begin{enumerate}
\item The vector space dimension $\operatorname{dim}(E\bigotimes_AA/\m)=n$.
\item The Fitting ideal $\Fitt_{n-1}(E)=0$.
\end{enumerate}
\end{prop}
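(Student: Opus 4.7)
The plan is to run both implications through a single well-chosen presentation of $E$. For the forward direction, if $E\cong A^n$ then a tautological presentation is $0 \ra A^n \ra E \ra 0$, i.e.\ in the shape \eqref{fitting presentation} one can take the map $\psi$ to be the zero map from $0$ copies of $A$. The $1$-minors of the empty matrix are by convention all zero, so $\mathrm{Fitt}_{n-1}(E)=0$; the dimension hypothesis (1) is obvious. Alternatively, invariance of Fitting ideals under the choice of presentation lets one read this off from any resolution.

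For the converse, I would start by using Nakayama together with hypothesis (1) to produce a surjection $A^n \twoheadrightarrow E$. Choose $e_1,\ldots ,e_n \in E$ whose images form a $k$-basis of $E\otimes_A A/\mathfrak{m}$; these necessarily generate $E$ over the local ring $(A,\mathfrak{m})$, and thus give a short exact sequence
\[
\xymatrix{ 0 \ar[r] & K \ar[r] & A^n \ar[r] & E \ar[r] & 0.}
\]
Choosing any generating set of $K$ (for instance the underlying set of $K$ itself) extends this to a presentation $\bigoplus_{\alpha}A \ra^{\psi} A^n \ra E \ra 0$ of the form required in \eqref{fitting presentation}.

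Now I invoke hypothesis (2): with $N=n$, the ideal $\mathrm{Fitt}_{n-1}(E)$ is by definition generated by the $(N-(n-1))=1$-minors of $\psi$, that is by the matrix entries of $\psi$ themselves. The assumption $\mathrm{Fitt}_{n-1}(E)=0$ therefore forces every entry of $\psi$ to be zero, so $\psi$ is the zero map, its image $K$ is trivial, and $A^n \ra E$ is an isomorphism.

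There is no real technical obstacle; the only point one has to be careful about is arranging a presentation whose middle free module has rank exactly $n=\dim_k E/\mathfrak{m}E$, which is precisely what Nakayama buys us, and confirming that Fitting ideals computed from any such presentation agree with the intrinsic invariant of $E$.
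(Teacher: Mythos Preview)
Your proof is correct and follows essentially the same route as the paper's own argument: both directions use a presentation with exactly $n$ generators, obtained in the nontrivial direction via Nakayama's Lemma from hypothesis (1), and then observe that $\Fitt_{n-1}(E)$ is generated by the $1$-minors (entries) of the presentation matrix, so hypothesis (2) forces the matrix to be zero. Your write-up is somewhat more explicit about the kernel $K$ and the role of presentation-independence, but there is no substantive difference.
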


\begin{proof} If $E$ is free, then we can let $\psi $ be the zero-map in \ref{fitting presentation} and clearly the conditions hold.

We will prove the converse. By assumption the dimension of the vector space $E\bigotimes_AA/\m$ is $n$. Nakayamas Lemma then guarantees that a lift of a basis  will generate the $A$-module $E$.  We can thus find an $A$-linear map of free modules
$ \xymatrix{\psi \colon \bigoplus_{\alpha }A\ar[r] &\bigoplus_{i=1}^n A}$
whose co-kernel is $E$. The Fitting ideals are independent of the presentation, so the $n-(n-1)$-minors of the map $\psi$ generate the Fitting ideal $\Fitt_{n-1}(E)$. By assumption this ideal is the zero ideal, hence $\psi$ is the zero map and we get that its co-kernel $E=\bigoplus_{i=1}^nA$.
\end{proof}

\subsection{Graded rings} Let $A$ be a commutative unital ring, and let $V$ be an $A$-module. The symmetric tensor algebra $\Sym_A(V)$ is naturally graded
\begin{equation}\label{graded ring}
 \xymatrix{\Sym_A(V)=\bigoplus _{m\geq 0}\Sym_m(V)}.
\end{equation}
We have that $V=\Sym_1(V)$ is the degree one component. 


\begin{cor}\label{vector space and free} Let $(A,\m )$ be a local ring, $V$ a finitely generated $A$-module,  and set $F=\bigoplus_{i=1}^p\Sym_A(V)$. Let $R\subseteq F$ be a submodule generated in degree $d$. Assume that the vector space $F_d/R_d\bigotimes_AA/\m$ has dimension $n\leq d$. Then, for any $m\geq d$ we have that $E_m=F_m/R_m$ is free of rank $n$ if and only if $\Fitt_{n-1}(E_m)=0$.
\end{cor}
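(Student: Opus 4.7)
The plan is to deduce the corollary from Proposition~\ref{Fitting} applied to the local ring $A$ and the finitely generated $A$-module $E_m$. The forward direction is immediate: a free module of rank $n$ admits the zero presentation, for which $\Fitt_{n-1}=0$. For the converse, Proposition~\ref{Fitting} requires two inputs, namely the vanishing $\Fitt_{n-1}(E_m)=0$, which is the hypothesis, and the residue-field dimension identity $\dim_k(E_m\otimes_A k)=n$, where $k=A/\m$. So the entire task reduces to establishing this dimension identity for every $m\geq d$.

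The lower bound $\dim_k(E_m\otimes_A k)\geq n$ comes essentially for free: since Fitting ideals are compatible with arbitrary base change, $\Fitt_{n-1}(E_m\otimes_A k)=0$, and a finite dimensional $k$-vector space with vanishing $(n-1)$-th Fitting ideal has dimension at least $n$. For the opposite bound I would first pass to the residue field. Symmetric powers commute with base change, so $\bar F:=F\otimes_A k$ is free of rank $p$ over the polynomial ring $\Sym_k(V\otimes_A k)$; since $R$ is generated in degree $d$ over $A$, its image $\bar R\subseteq\bar F$ is generated in degree $d$, and $\bar F_m/\bar R_m = E_m\otimes_A k$. We are thus in the setup of Proposition~\ref{Gasharov}(1).

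Starting from the base case $\dim_k(\bar F_d/\bar R_d)=n$ supplied by the hypothesis, I would apply Proposition~\ref{Gasharov}(1) iteratively: at each degree it delivers $\dim_k(\bar E_{m+1})\leq(\dim_k\bar E_m)^{\langle m\rangle}$. The crucial combinatorial input is Remark~\ref{constant expansion}: since $\dim_k\bar E_m\leq n\leq d\leq m$ holds by induction, we get $(\dim_k\bar E_m)^{\langle m\rangle}=\dim_k\bar E_m$, and so the upper bound $\dim_k\bar E_m\leq n$ propagates to all $m\geq d$. The subtle point that I expect to require care is precisely this stabilization of the Macaulay-type bound under iteration, which is what makes the hypothesis $n\leq d$ essential; once the dimension identity $\dim_k(E_m\otimes_A k)=n$ is in hand, Proposition~\ref{Fitting} yields the conclusion.
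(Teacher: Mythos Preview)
Your proof is correct and follows essentially the same route as the paper: reduce to Proposition~\ref{Fitting}, get the lower bound $\dim_k(E_m\otimes_A k)\geq n$ from the vanishing of the Fitting ideal after base change, and get the upper bound from Proposition~\ref{Gasharov}(1). The only difference is presentational: the paper compresses the upper-bound step into a single sentence, whereas you spell out the iteration of Proposition~\ref{Gasharov}(1) together with Remark~\ref{constant expansion}, which is exactly what that sentence is implicitly invoking.
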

\begin{proof} Assume that $\mathrm{Fitt}_{n-1}(E_m)=0$. By the proposition we need to verify that the vector space dimension of $E_m\bigotimes_AA/\m$ is $n$. Let $\psi \colon G' \ra G=F_m\bigotimes_AA/\m$ be a map of $A/\m $-vector spaces whose cokernel is $E_m\bigotimes_AA/\m$. Since $\mathrm{Fitt}_{n-1}(E_m)=0$ we have that  all $\mathrm{rk} (G)-n+1$ minors of $\psi$ are zero. In other words the image $\im{\psi}$ has dimension strictly less than $\mathrm{rk}(G) -n+1$. Thus,
$$\operatorname{dim}(\im (\psi))\leq \mathrm{rk}(G) -n.$$
That means that the co-kernel has dimension $\dim (E_m\bigotimes_AA/\m)\geq n$. We have that $S_A(V)\bigotimes_AA/\m=A/\m [X_0, \ldots, X_r]$ where $r+1$ is the vector space dimension of $V\bigotimes_AA/\m$. By Proposition \ref{Gasharov} (1) we have the inequality the other way around, and we obtain that the vector space dimension of $E_m\bigotimes_AA/\m$ is exactly $n$. 
The reverse implication follows from the Proposition.
\end{proof}

\subsection{Union of Fitting ideals} Let $E=\{E_m\}_{m\geq 0}$ be a collection of finitely generated $A$-modules. For fixed integer $n$, and non-negative integers $d$ and $s$ we define the ideal $\ufitt{n}{d,s}(E)\subseteq A$ to be the ideal
$$ \ufitt{n}{d,s}(E):=(\Fitt_n(E_{d}), \Fitt_n(E_{d+1}), \ldots , \Fitt_n(E_{d+s})).$$
We then have, for fixed integers $d$ and $n$, the inclusion of ideals
\begin{equation}\label{union of fittings}
\ufitt{n}{d,0}(E)\subseteq \ufitt{n}{d,1}(E) \subseteq \ufitt{n}{d,2}(E)\subseteq \cdots .
\end{equation}
Their union we denote by 
\begin{equation}\label{ideal infty}
 \lim_{s\to \infty}\ufitt{n}{d,s}(E)= \ufitt{n}{d, \infty}(E).
\end{equation}
If $E=\bigoplus_{m\geq 0}E_m$ is a direct sum of $A$-modules, then we write $\ufitt{n}{d,s}(E)$ for the ideal that we assign to the collection $\{E_m\}_{m\geq 0}$.


\begin{cor}\label{factorization} Let $R\subseteq F=\bigoplus_{i=1}^p\Sym_A(V)$ be a submodule generated in degree $d$, and let $E=F/R$. Let $f\colon A \ra B$ be a homomorphism of rings, and let $s\geq 0$ be an integer.  Assume that for every prime ideal $\m $ in $A$, the vector space $E_d\bigotimes_A\kappa(\m)$ is of dimension $n$, where $n\leq d$. Then the morphism $f$ factorizes via $A\ra A/\ufitt{n-1}{d,s}(E_d)$ if and only if $E_m\bigotimes_AB$ are projective $B$-modules of rank $n$, for $d\leq m\leq s+d$.
\end{cor}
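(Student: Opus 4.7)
The plan is to translate the factorization condition into a vanishing of Fitting ideals after base change, and then to verify this vanishing pointwise on $\Spec(B)$ by invoking the local criterion from Corollary \ref{vector space and free}.

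First, by the very definition of $\ufitt{n-1}{d,s}(E)$, the morphism $f\colon A\to B$ factors through $A\to A/\ufitt{n-1}{d,s}(E)$ if and only if $\Fitt_{n-1}(E_m)\cdot B = 0$ for every $m$ with $d\leq m\leq d+s$. The formation of Fitting ideals commutes with arbitrary base change, so this is equivalent to the vanishing $\Fitt_{n-1}(E_m\otimes_A B) = 0$ for each such $m$. The graded setup itself descends as well: $F\otimes_A B = \bigoplus_{i=1}^p\Sym_B(V\otimes_A B)$, and $R\otimes_A B$ is a graded submodule still generated in degree $d$.

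Next, I aim to apply Corollary \ref{vector space and free} after localizing at each prime $\mathfrak{p}\subset B$. Generation of $R\otimes_AB$ in degree $d$ is clearly preserved upon further localizing at $\mathfrak{p}$. For the dimension hypothesis, let $\mathfrak{q}\subset A$ be the contraction of $\mathfrak{p}$; then
$$E_d\otimes_A\kappa(\mathfrak{p}) \;=\; E_d\otimes_A\kappa(\mathfrak{q})\otimes_{\kappa(\mathfrak{q})}\kappa(\mathfrak{p}),$$
so the assumption $\dim_{\kappa(\mathfrak{q})}(E_d\otimes_A\kappa(\mathfrak{q}))=n$ transports directly to $\dim_{\kappa(\mathfrak{p})}(E_d\otimes_A\kappa(\mathfrak{p}))=n$. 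Corollary \ref{vector space and free} then tells us that $(E_m\otimes_A B)_\mathfrak{p}$ is free of rank $n$ over $B_\mathfrak{p}$ if and only if $\Fitt_{n-1}((E_m\otimes_A B)_\mathfrak{p}) = 0$, for every $m\geq d$.

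Finally, since Fitting ideals also commute with localization, the global vanishing $\Fitt_{n-1}(E_m\otimes_A B) = 0$ is equivalent to its vanishing at every prime $\mathfrak{p}$ of $B$, and combining with the preceding step, this is the same as $(E_m\otimes_A B)_\mathfrak{p}$ being free of rank $n$ at each $\mathfrak{p}$. Since $E_m\otimes_A B$ is a finitely generated $B$-module, being locally free of rank $n$ at every prime is equivalent to being a projective $B$-module of rank $n$, yielding the stated equivalence. The only real obstacle is the bookkeeping around the base change, namely verifying that the graded structure, the degree-$d$ generation of $R$, and the dimension hypothesis on $E_d$ all propagate from $A$ to $B_\mathfrak{p}$; once these are in hand, the corollary assembles from the Fitting ideal base-change formula together with the local criterion of Corollary \ref{vector space and free}.
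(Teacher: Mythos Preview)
Your proof is correct and follows the same approach as the paper's own argument: both directions come down to the base-change formula $\Fitt_{n-1}(E_m\otimes_A B)=\Fitt_{n-1}(E_m)B$ together with Corollary~\ref{vector space and free}. The paper's proof is simply terser, saying only that one direction is immediate from projectivity and that the converse ``follows from Corollary~\ref{vector space and free}''; you have spelled out the localization at primes of $B$ and the transport of the dimension hypothesis along $\kappa(\mathfrak q)\to\kappa(\mathfrak p)$, which are exactly the details the paper leaves implicit.
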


\begin{proof} If $E_m\bigotimes_AB$ is projective $B$-module of rank $n$, then in particular $\Fitt_{n-1}(E_m\bigotimes_AB)=\Fitt_{n-1}(E_m)B$ is zero, and the factorization follows. The converse follows from Corollary \ref{vector space and free}.
\end{proof}

\section{The Quot functor and regularity of sheaves}

In this section we will recall some known facts about Castelnuovo-Mumford regularity. We have included these results for completeness, and we do not claim any originality in our presentation.

\subsection{Castelnuovo-Mumford regularity}  Let $f\colon X\ra S$ be a projective morphism, and let ${\calO}_X(1)$ be a very ample line bundle. For any sheaf $\calF$ on $X$, we let $\calF(m)=\calF\bigotimes_X\calO_X(m)$. If $s\in S$ is a point, then $\calF\bigotimes_S\kappa (s)$ denotes the restriction of the sheaf $\calF$ to the fiber $f^{-1}(s)$. A coherent sheaf $\calF$ on a projective scheme $X\ra S$ is fiber wise $m_0$-regular if for every point $s\in S$ the sheaf $\calF\bigotimes_S\kappa (s)$ is $m_0$-regular, that is
$$\xymatrix{ H^i(f^{-1}(s), \calF(m_0-i)\bigotimes_S\kappa(s))=0 \quad \text{for}\quad i\geq 1.}$$ 
A sheaf $\calF$ that is $m_0$-regular is also $m$-regular for any $m\geq m_0$.

\begin{prop}\label{d regular}Let $X\ra \Spec(k)$ be a projective morphism, with  $k$ a field,  and let ${\calO}_X(1)$ be a very ample line bundle. Let $\calF$ be a coherent and $m_0$-regular sheaf on $X$. Let $\calF\ra \calE$ be a coherent quotient sheaf, with finite support, and let $\operatorname{dim}H^0(X, \calE)=n$. Then we have the following.
\begin{enumerate}
\item The sheaf $\calE$ is $m_0$-regular.
\item The kernel $\mathrm{ker}(\calF\ra \calE)$ is $m_0+n$-regular.
\end{enumerate}
\end{prop}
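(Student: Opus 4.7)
For part (1), since $\calE$ has zero-dimensional, hence affine, support, one has $H^p(X,\calE(m))=0$ for every $p\geq 1$ and every $m\in\ZZ$; in particular $\calE$ is $m$-regular for every $m$, so in particular $m_0$-regular.

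For part (2), I plan to argue by descending induction along a composition series of $\calE$. Being a coherent $\calO_X$-module of zero-dimensional support, $\calE$ has finite $\calO_X$-length $L$, and each composition factor is a residue-field skyscraper $\kappa(x)$ for some closed point $x\in X$. Since $\dim_k\kappa(x)\geq 1$, one has $L\leq\dim_k H^0(X,\calE)=n$, so it suffices to prove that $\ker(\calF\ra\calE)$ is $(m_0+L)$-regular. Fix a filtration $0=\calE_0\subset\calE_1\subset\cdots\subset\calE_L=\calE$ with $\calE_i/\calE_{i-1}\cong\kappa(x_i)$, and set $\mathcal{K}^{(i)}:=\ker(\calF\ra\calE/\calE_i)$. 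Applying the snake lemma to the pair of quotients $\calF\twoheadrightarrow\calE/\calE_{i-1}$ and $\calF\twoheadrightarrow\calE/\calE_i$ produces short exact sequences
$$0\ra\mathcal{K}^{(i-1)}\ra\mathcal{K}^{(i)}\ra\kappa(x_i)\ra 0,$$
with $\mathcal{K}^{(L)}=\calF$ and $\mathcal{K}^{(0)}=\ker(\calF\ra\calE)$. I would then show by descending induction on $i$ that $\mathcal{K}^{(i)}$ is $(m_0+L-i)$-regular; the starting case $i=L$ is the hypothesis on $\calF$, and $i=0$ yields the desired conclusion.

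For the inductive step, examine the long exact cohomology sequence of the short exact sequence above. For each cohomological degree $p\geq 2$ the skyscraper contributes nothing, so $H^p(\mathcal{K}^{(i-1)}(m))\cong H^p(\mathcal{K}^{(i)}(m))$ and the required vanishings at those $p$ follow at once from the inductive hypothesis. The only point that needs real work is $p=1$, where $H^1(\mathcal{K}^{(i)}(m_0+L-i))=0$ by inductive regularity, so the desired vanishing $H^1(\mathcal{K}^{(i-1)}(m_0+L-i))=0$ reduces to the surjectivity of
$$H^0(\mathcal{K}^{(i)}(m_0+L-i))\ra H^0(\kappa(x_i)).$$
Here I would invoke Mumford's global generation theorem: an $m$-regular sheaf is generated by its global sections after twisting by $m$. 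Applied to $\mathcal{K}^{(i)}$, its global sections surject onto the fibre $\mathcal{K}^{(i)}(m_0+L-i)\otimes\kappa(x_i)$, and this fibre in turn surjects onto $\kappa(x_i)$ because the sheaf map $\mathcal{K}^{(i)}\twoheadrightarrow\kappa(x_i)$ is itself surjective. Composing yields the required surjectivity, and the induction is complete. The one step that needs genuine care is the snake-lemma identification of $\mathcal{K}^{(i)}/\mathcal{K}^{(i-1)}$ with $\calE_i/\calE_{i-1}=\kappa(x_i)$; once that is in place, the cohomology chase is mechanical.
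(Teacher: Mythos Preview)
Your argument is correct. Both parts work as written: part~(1) is immediate from the vanishing of higher cohomology on a zero-dimensional support, and in part~(2) the snake-lemma identification $\mathcal{K}^{(i)}/\mathcal{K}^{(i-1)}\cong\kappa(x_i)$ is valid, the induction is set up properly, and the appeal to global generation of an $m$-regular sheaf in degree~$m$ is exactly what is needed to force the surjection on $H^0$.

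Your route, however, is genuinely different from the paper's. The paper argues via a general hyperplane $H$ avoiding $\mathrm{Supp}(\calE)$: since $\calK_{|H}=\calF_{|H}$ is $m_0$-regular, the restriction sequence $0\to\calK(-1)\to\calK\to\calK_{|H}\to 0$ forces $H^i(\calK(m-i))$ to stabilise (hence vanish) for $i\geq 2$ and $m\geq m_0$, and then the bound $\dim H^1(\calK(m))\leq n$ together with the standard strict-decrease lemma for $H^1$ gives $(m_0+n)$-regularity. Your approach replaces the hyperplane geometry by a composition series of $\calE$ and peels off one skyscraper at a time, using Mumford's global-generation theorem at each step. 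The paper's method is the specialisation of Mumford's general regularity bound (valid for arbitrary Hilbert polynomial) to the constant case; yours is tailored to finite support and is arguably more transparent there, but it does not extend to higher-dimensional quotients. Note also that your bound $m_0+L$ with $L\leq n$ is in fact sharper when the residue fields $\kappa(x_i)$ are nontrivial extensions of $k$, though this refinement plays no role in the paper.
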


\begin{proof} We may assume that  the base field $k$ has infinite many elements, and we may assume that $X=\mathbb{P}^r$ is the projective $r$-space. If $r=0$ there is nothing to prove, and we assume therefore that $r>0$. We can then find a hyperplane $H\subset X$ avoiding the finite set $\mathrm{Supp}(\calE)$. Let $\calK$ denote the kernel of $\calF\ra \calE$.  We tensor the exact sequence 
$$ 0 \ra \calK(-1) \ra \calK \ra \calK_{|H} \ra 0,$$
with $\calO (m+1-i)$. After taking the long exact sequence in co-homology, and using the fact that  $\calK_{|H}=\calF_{|H}$ we obtain  that $H^i(\calK(m-i))=H^i(\calK(m+1-i))$ for all $m\geq m_0$, and all $i\geq 2$, and where we write $H^i(\calF)$ when we mean $H^i(X,\calF)$.  Hence these groups are zero, that is $H^i(\calK(m-i))=0$, for $m\geq m_0$ and $i\geq 2$. 

From the short exact sequence $0\to \calK \to \calF \to \calE\to 0$, we obtain after after tensoring with $\calO (m-i)$ the exact sequence
$$H^i(\calF (m-i)) \ra H^i(\calE (m-i)) \ra H^{i+1}(\calK (m+1-i-1)).$$
For $i\geq 1$, and $m\geq m_0$ we have that the left hand term is zero by assumption, and that the right hand side is zero by the observation above. This proves the first statement about $m_0$-regularity of $\calE$.

For $m\geq m_0-1$ we have, by the assumption on the regularity of $\calF$,  an exact sequence
$$ H^0(\calE(m)) \ra H^1(\calK(m+1-1)) \ra 0.$$
Hence $H^1(\calK(m))$ is bounded by the dimension of $H^0(\calE(m))$. Since $\calE$ has finite support, we have that $H^0(\calE(m))=H^0(\calE)=n$. It is well-known that  the dimension of $H^1(\calF(m+1))$ is strictly less than the dimension of $H^1(\calF(m))$, provided that the latter group is non-zero.  We then get that 
$$\operatorname{dim}H^1(\calK (m+1+n-1))=0,$$
when $m\geq m_0-1$. Combined with the vanishing observed above, we obtain that $\calK$ is $m_0+n$-regular.
\end{proof}

\begin{rem} The proof of the Proposition \ref{d regular} is a simplified version of the more general statement with non-constant polynomials (see Mumford  \cite{Mumfordlectures}). Our simplified version presented here is not only for completeness and readability, but we also wanted to state the, well-known, explicit regularity bound for quotients having constant Hilbert polynomial. 
\end{rem}

\begin{cor}\label{regular families} Let $f\colon X \ra S$ be a projective morphism of Noetherian schemes, and let ${\calO}_X(1)$ be a very ample line bundle. Let $\calE$ be a coherent sheaf on $X$ that is flat over $S$, and fiber wise $m_0$ regular. Then we have, for each $m\geq m_0$ that
\begin{enumerate}
\item The canonical map $f^*f_*\calE(m) \ra \calE(m)$ is surjective.
\item The canonical map $f_*\calE(m) \bigotimes_Sf_*{\calO}(1) \ra f_*\calE(m+1)$ is surjective.
\item The sheaves $R^if_*\calE(m)=0$ for $i\geq 1$, and the sheaf $f_*\calE(m)$ is locally free.
\item If we have a Cartesian diagram of (Noetherian) schemes
$$ \xymatrix{X' \ar[r]^{\varphi'} \ar[d]_{f'} & X\ar[d]^{f} \\S'\ar[r]^{\varphi}& S, }$$
then the canonical map $\varphi^*f_*\calE(m) \ra f'_{*}{\varphi'}^{*}\calE(m)$ is an isomorphism.
\end{enumerate}
\end{cor}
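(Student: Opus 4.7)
The plan is to establish (3) and (4) first via cohomology and base change, and then reduce (1) and (2) to their fiberwise analogs. For (3) and (4), the hypothesis that $\calE$ is flat over $S$ and fiberwise $m_0$-regular yields $H^i(X_s,\calE(m)\otimes_S\kappa(s))=0$ for every $s\in S$, every $i\geq 1$, and every $m\geq m_0$. Grothendieck's cohomology and base change theorem, applied by descending induction on $i$, therefore forces $R^if_*\calE(m)=0$ for all $i\geq 1$ and $m\geq m_0$. Once $R^1f_*\calE(m)=0$, the criterion in cohomology and base change gives an isomorphism $f_*\calE(m)\otimes_S\kappa(s)\cong H^0(X_s,\calE(m)_s)$ and local freeness of $f_*\calE(m)$, proving (3). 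Since fiberwise $m_0$-regularity is preserved under any base change $\varphi\colon S'\to S$, the same reasoning applied to $f'$ yields $R^if'_*{\varphi'}^*\calE(m)=0$ for $i\geq 1$, and the base-change compatibility in (4) follows.

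For (1), surjectivity of $f^*f_*\calE(m)\to\calE(m)$ is a local condition on $X$ and hence, by Nakayama's lemma, may be tested on fibers. Via (4) the fiberwise map identifies with the canonical evaluation
$$H^0(X_s,\calE(m)_s)\otimes_{\kappa(s)}\calO_{X_s}\longrightarrow\calE(m)_s,$$
whose surjectivity is the well-known fact that any $m_0$-regular coherent sheaf is globally generated after twisting by any $m\geq m_0$.

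For (2), surjectivity of $f_*\calE(m)\otimes_Sf_*\calO(1)\to f_*\calE(m+1)$ is again a local question on $S$ that reduces, via (3) and (4) applied both to $\calE$ and to $\calO$, to the fiberwise statement. This fiberwise statement is the classical Castelnuovo--Mumford multiplication lemma: for an $m_0$-regular coherent sheaf $\calF$ on projective space over a field and $m\geq m_0$, the multiplication map $H^0(\calF(m))\otimes H^0(\calO(1))\to H^0(\calF(m+1))$ is surjective (see \cite{Mumfordlectures}).

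The principal technical hurdle is the descending induction in cohomology and base change required for (3) and (4); one must verify the hypotheses of Grothendieck's criterion at each stage, but the strong fiberwise regularity hypothesis makes the required vanishings automatic. Once (3) and (4) are in hand, parts (1) and (2) are essentially formal consequences of the classical Castelnuovo--Mumford theory on the fibers.
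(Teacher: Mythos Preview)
Your argument is correct and follows essentially the same route as the paper: both reduce (1) and (2) to the classical fiberwise Castelnuovo--Mumford statements and deduce (3) and (4) from Grothendieck's cohomology and base change theorem. The only difference is the order---you establish (3) and (4) first and then invoke them to make the fiberwise reduction for (1) and (2) precise, whereas the paper states the fiber reduction for (1) and (2) up front; your ordering is logically cleaner, but the content is the same.
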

\begin{proof} The canonical maps in (1) and (2) are  surjective if and only if surjective when restricted to each fiber $f^{-1}(s)$, for any $s\in S$. By assumption the sheaves $\calE(m_0)$ restricted to a fiber, are $m_0$-regular, where the statements hold \cite[Proposition p. 99]{Redbook}. To prove statements (3), we use that the sheaves $R^if_*\calE(m)$ are coherent so it suffices to show that $R^if_*\calE(m)\bigotimes_S\kappa (s)$ are zero, for any point $s\in S$. The canonical map
$$\xymatrix{ R^if_*\calE(m)\bigotimes_S \kappa (s) \ra H^i(f^{-1}(s), \calE(m)\bigotimes_S \kappa (s))}$$
is clearly surjective for $i\geq 1$ and $m\geq m_0$. Since $\calE$ was assumed flat, the canonical map is an isomorphism, and the statements (3)  follows from the co-homological base change result \cite[Theorem 12.11]{H52}. The last Statement (4) follows by statements (3) proven combined with   \cite[6.9.9.2]{EGA3II}.
\end{proof}

\subsection{Quot functor} \cite{FGAIV} Let $f\colon X\ra S$ be a morphism of schemes, and ${\calF}$ a quasi-coherent sheaf on $X$. We let
$$ \mathrm{Quot}^n({\calF}/X/S) $$
denote the collection of quasi-coherent ${\calO}_X$-module quotients $\calF \ra \calE$, where $\calE$ is flat, finite and of rank $n$ over the base scheme $S$. Two quotients $\calF\ra \calE $ and $\calF\ra \calE'$ are considered as equal in $\mathrm{Quot}^n(\calF /X/S)$ if their respective kernels are equal as subsheaves of $\calF$.  For any morphism of schemes $\varphi\colon S'\ra S$, let $\varphi'\colon X' =X\times_SS' \ra X$, and let ${\calF}'=\varphi'^*\calF$.  The Quot functor $\underline{\mathrm{Quot}}^n_{\calF/X/S}$  is the contra-variant functor that to every $S$-scheme $S'$ assigns the set $\mathrm{Quot}^n({\calF'}/X'/S') $.
 
\subsection{Grassmannians} For any quasi-coherent sheaf $\calG$ on a scheme $S$, we let $\mathrm{Grass}^n(\calG)$ denote the Grassmannian parametrizing locally free rank $n$ quotients of $\calG$. If $\varphi \colon S' \ra S$ is a morphism of schemes, then the $S'$-valued points of the Grassmannian $\mathrm{Grass}^n(\calG)$ is the set of quasi-coherent ${\calO}_{S'}$-module quotients $\varphi^*\calG \ra \calE$ that are locally free of rank $n$. Two quotients are identified if their kernels coincide. It is an elementary fact that the Grassmannian is a scheme \cite[Proposition 9.7.7]{EGA1}.

\begin{cor}\label{vanishing of direct images} Let $f\colon X \ra S $ be a projective morphism of Noetherian schemes, and let ${\calO}_X(1)$ be a very ample line bundle. Let $\calF$ be a coherent sheaf on $X$ that is flat and fiber wise $m_0$-regular over $S$. Then we have, for any $m\geq m_0$,  a natural map 
\begin{equation}\label{iota}
 \iota \colon \mathrm{Quot}^n_{{\calF}/X/S} \ra \mathrm{Grass}^n(f_*{\calF}(m+n)).
\end{equation}
\end{cor}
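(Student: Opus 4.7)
The plan is to construct, for each $S$-scheme $\varphi\colon S' \to S$, a set map from $\mathrm{Quot}^n_{\calF/X/S}(S')$ to $\mathrm{Grass}^n(f_*\calF(m+n))(S')$ in a manner that is functorial in $S'$.  Start with a quotient $\calF' \to \calE'$ representing a point of $\mathrm{Quot}^n_{\calF/X/S}(S')$, where $\calF' = \varphi'^*\calF$ on $X' = X\times_S S'$, and let $\calK'$ denote the kernel. Since $\calE'$ is $S'$-flat by assumption and $\calF'$ is $S'$-flat (being the pullback of the flat sheaf $\calF$), the kernel $\calK'$ is also $S'$-flat. Fiber by fiber, $\calF'$ is $m_0$-regular (as a pullback of a fiberwise $m_0$-regular sheaf), and $\calE'$ restricted to a fiber has length $n$ with finite support. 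Proposition \ref{d regular} therefore tells us that on each fiber $\calE'$ is $m_0$-regular and $\calK'$ is $(m_0+n)$-regular.

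Next, I would apply Corollary \ref{regular families} separately to the three fiberwise-regular $S'$-flat sheaves $\calK'$, $\calF'$, and $\calE'$, using the twist $m+n$. Since $m \geq m_0$, the sheaf $\calE'(m+n)$ is fiberwise $0$-regular (it is $m_0$-regular and being twisted by $m+n \geq m_0$), and similarly for $\calF'(m+n)$; the kernel $\calK'(m+n)$ is fiberwise $0$-regular because $\calK'$ is fiberwise $(m_0+n)$-regular and $m+n \geq m_0+n$. Part (3) of the corollary gives the vanishing $R^1 f'_*\calK'(m+n) = 0$ and local freeness of all three push-forwards, so the push-forward of $0 \to \calK'(m+n) \to \calF'(m+n) \to \calE'(m+n) \to 0$ yields a short exact sequence
$$ 0 \ra f'_*\calK'(m+n) \ra f'_*\calF'(m+n) \ra f'_*\calE'(m+n) \ra 0 $$
of locally free $\calO_{S'}$-modules. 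The rank of $f'_*\calE'(m+n)$ is $n$, since $\calE'$ has finite support so twisting does not change its global sections and each fiber contributes exactly $n$.

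To see that this quotient lives in $\mathrm{Grass}^n(f_*\calF(m+n))(S')$, I would invoke part (4) of Corollary \ref{regular families}, which provides a canonical isomorphism $\varphi^* f_*\calF(m+n) \cong f'_*\calF'(m+n)$. Composing with the surjection to $f'_*\calE'(m+n)$ then exhibits a locally free rank $n$ quotient of $\varphi^* f_*\calF(m+n)$; define $\iota_{S'}(\calF' \to \calE')$ to be the equivalence class of this quotient. The assignment is well-defined because the construction depends only on the subsheaf $\calK' \subseteq \calF'$ and its push-forward after twisting.

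The last point to verify is naturality in $S'$: given a further morphism $\psi\colon S'' \to S'$, the two ways of producing a Grassmannian point from $\calF' \to \calE'$ — first pulling back along $\psi$ in the Quot functor then applying $\iota_{S''}$, versus applying $\iota_{S'}$ then pulling back the Grassmannian point along $\psi$ — agree, because both equal the image under $\psi^*$ of the short exact sequence constructed above. This follows from repeated application of the base change isomorphism (4) in Corollary \ref{regular families}. The main technical obstacle is precisely this: ensuring that the cohomology and base change hypotheses hold uniformly enough to make $\iota$ a morphism of functors and not merely a pointwise map; here the fact that the required regularity bound $m+n$ is large enough to accommodate the kernel $\calK'$ at every point of the base, supplied by Proposition \ref{d regular}(2), is what makes the construction work.
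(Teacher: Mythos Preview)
Your proof is correct and follows essentially the same approach as the paper: use Proposition~\ref{d regular} to get fiberwise $(m_0+n)$-regularity of the kernel, then apply Corollary~\ref{regular families}(3) to obtain the short exact sequence of push-forwards with locally free rank~$n$ quotient, and finally invoke Corollary~\ref{regular families}(4) for base change and functoriality. If anything, you are slightly more explicit than the paper in spelling out the role of part~(4) and in verifying naturality along a further base change $S'' \to S'$.
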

\begin{proof} Let $\varphi \colon S'\ra S$ be a morphism of schemes,  and let $\calK$ denote the kernel $\mathrm{ker}({\varphi '}^{*}\calF=\calF' \ra \calE)$, where $\varphi'\colon X\times_SS'=X' \ra X$ is the induced projection map, and where $\calE$ is flat and with constant Hilbert polynomial $n$. Since $\calE$ is flat, we have an exact sequence
$$ \xymatrix{0 \ar[r] & \calK \bigotimes_{S'}\kappa (s) \ar[r] & \calF'\bigotimes_{S'} \kappa (s) \ar[r] & \calE \bigotimes_{S'}\kappa (s) \ar[r] & 0},$$
when restricted to a fiber over a point $s\in S'$. By the proposition we have that the sheaf $\calK\bigotimes_{S'}\kappa (s)$ is $m_0+n$-regular. Since $\calF$ was assumed flat, so is $\calK$, and it follows by the Corollary above that $R^i{f'}_{*}\calK(m+n)=0$ for $i\geq 1$, and $m\geq m_0$, where $f' \colon X' \ra T$. Thus we have an exact sequence
$$\xymatrix{ 0\ar[r] & {f'}_{*}\calK(m+n) \ar[r] & {f'}_{*}\calF' (m+n) \ar[r] & {f'}_{*}\calE(m+n) \ar[r] & 0}$$
of coherent sheaves on $S'$. Hence we have that ${f'}_{*}\calE(m+n)$ is a quotient of ${f'}_{*}{\varphi '}^*\calF(m+n)$, and it is furthermore locally free by the Corollary \ref{regular families}, which also gives the functoriality of the constructed map.
\end{proof}

\begin{rem} The natural map \ref{iota} is the one main ingredient in proving the representability of the Quot functor. Grothendieck \cite{FGAIV} then proceeds with showing that the map is locally a closed immersion, which then implies that the Quot functor is representable with a quasi-projective schemes. Thereafter he establishes that the Quot functor satisfies the valuative criteria for properness which proves that the Quot functor is represented by a closed subscheme of the Grassmannian. Other existing proofs  (e.g. \cite{Mumfordlectures}, \cite{StrommeBanach}, \cite{FGAexplained}, \cite{HaimanSturmfels}, \cite{AltmanKleiman}) more or less follow this strategy. A different approach with the Hilbert functor being a section of a representable  Homogeneous Hilbert functor is found in \cite{HomogeneousHilb}.  
\end{rem}

\section{The sheaf cone and Fitting ideals}

\subsection{Projective space} If $V$ is an $A$-module, we have the associated projective space $\Proj{V}=\operatorname{Proj}(\Sym_A(V))$. The scheme $\Proj{V}\ra \Spec(A)$ comes equipped with an universal line bundle $\calO(1)$, which is very ample when $V$ is finitely generated. From a graded $\Sym_A(V)$-module $E$ we obtain in a natural way a sheaf $\calE$ on $\Proj{V}$. 


\begin{lemma}\label{global sections} Let $A$ be a Noetherian ring, $V$ a finitely generated and projective $A$-module. Let $F=\bigoplus_{i=1}^p\Sym_A(V)$, and let $\calF$ denote the associated free sheaf on $\Proj{V}$.
\begin{enumerate}
\item For any integer $m$ we have that $F_m=H^0(\Proj{V},\calF(m))$.
\item Let $ \xymatrix{ 0\ar[r] & \calR \ar[r]& \calF \ar[r] &  \calE \ar[r] & 0}$ be a short exact sequence of coherent sheaves on $\Proj{V}$. Assume that $\calE$ is flat, finite and of rank $n$ over $\Spec(A)$. Let $d\geq n$, and let $R\subseteq F$ denote the graded $\Sym_A(V)$-submodule generated by 
$H^0(\Proj{V},\calR (d))$. Then we have, for each $m\geq d$ that 
$$ \xymatrix{F_m/R_m=H^0(\Proj{V}, \calE(m))}.$$ 
\end{enumerate} 
\end{lemma}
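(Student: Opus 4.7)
\emph{Part (1)} reduces to the standard cohomology calculation on a relative projective space. Since $V$ is finitely generated projective over the Noetherian ring $A$, the scheme $\Proj{V}$ is Zariski-locally on $\Spec(A)$ a relative projective space $\mathbb{P}^r_A$ on which the classical identification $H^0(\mathbb{P}^r_A, \calO(m)) = A[X_0, \ldots, X_r]_m$ holds. Combined with the vanishing $R^i f_* \calO(m) = 0$ for $i \geq 1$ and $m \geq 0$, this globalises by cohomology and base change to $H^0(\Proj{V}, \calO(m)) = \Sym_m(V)$, and taking a direct sum of $p$ copies yields $H^0(\Proj{V}, \calF(m)) = F_m$.

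\emph{Part (2)} I would prove in three steps. First, I establish regularity of the three sheaves. The free sheaf $\calF$ is fibrewise $0$-regular. Applying Proposition \ref{d regular}(1) fibrewise to $\calF \otimes \kappa(s) \to \calE \otimes \kappa(s)$ — with $\dim_{\kappa(s)} H^0(f^{-1}(s), \calE\otimes \kappa(s)) = n$, since $f_*\calE$ is locally free of rank $n$, and $\calE\otimes\kappa(s)$ has finite support because $\calE$ is finite over $\Spec(A)$ — yields that $\calE$ is fibrewise $0$-regular. Proposition \ref{d regular}(2) then gives that $\calR=\ker(\calF\to\calE)$ is fibrewise $n$-regular, and hence also fibrewise $d$-regular since $d\geq n$. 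Flatness of $\calR$ over $\Spec(A)$ follows at once from flatness of $\calF$ and $\calE$.

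Second, I iterate Corollary \ref{regular families}(2) applied to $\calR$, starting in degree $d$: each multiplication $H^0(\Proj{V},\calR(t)) \otimes_A \Sym_1(V) \to H^0(\Proj{V},\calR(t+1))$ is surjective for $t \geq d$, and composing them shows that
\[
H^0(\Proj{V},\calR(d)) \otimes_A \Sym_{m-d}(V) \longrightarrow H^0(\Proj{V},\calR(m))
\]
is surjective for every $m \geq d$. Via Part (1) the left-hand factor coincides with $R_d \subseteq F_d$ by definition of $R$, and its target sits naturally inside $F_m$. Consequently $R_m = H^0(\Proj{V}, \calR(m))$ for every $m \geq d$.

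Third, twisting the short exact sequence by $\calO(m)$ and taking global sections produces
\[
0 \to H^0(\Proj{V},\calR(m)) \to H^0(\Proj{V},\calF(m)) \to H^0(\Proj{V},\calE(m)) \to H^1(\Proj{V},\calR(m)),
\]
and by Corollary \ref{regular families}(3) the last term vanishes for $m \geq d$. Combined with Part (1) and the identification of the previous step, this gives $F_m / R_m = H^0(\Proj{V}, \calE(m))$ for every $m \geq d$. The technical heart of the argument is precisely the identification $R_m = H^0(\calR(m))$: this is where the hypothesis $d \geq n$ enters, and where the fibrewise regularity of $\calR$ has to be promoted, via Corollary \ref{regular families}(2), to a statement about generation of the graded $\Sym_A(V)$-module $\bigoplus_{m\geq d} H^0(\Proj{V},\calR(m))$ from its degree-$d$ piece.
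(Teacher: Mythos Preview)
Your proof is correct and follows essentially the same route as the paper's: Part (1) via the standard computation of $H^0(\mathbb{P}^r,\calO(m))$, and Part (2) by combining the fibrewise regularity of $\calR$ from Proposition~\ref{d regular} with Corollary~\ref{regular families}(3) for the short exact sequence on global sections and Corollary~\ref{regular families}(2) for generation of $\bigoplus_{m\geq d}H^0(\calR(m))$ in degree $d$. You have simply spelled out more of the hypothesis-checking (finite support of $\calE\otimes\kappa(s)$, flatness of $\calR$, the iteration of the multiplication map) than the paper does.
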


\begin{proof} We have a natural map $F_m \ra H^0(\mathbb{P}(V), \calF(m))$. The first result then follows from \cite[Proposition 5.13]{H52}. We will verify the second statement. We have that $\calR$ is flat, and from Proposition \ref{d regular} we have that $\calR$ is $d$-regular, for any $d\geq n$,  on each fiber. By Proposition \ref{regular families} (3) we then have the exact sequence for $m\geq d$, 
$$ \xymatrix{ 0\ra H^0(\Proj{V},\calR(m)) \ra  F_m \ra H^0(\Proj{V},\calE(m)) \ra 0.}$$
By Proposition \ref{regular families} (2) the graded module $\bigoplus_{m\geq d} H^0(\Proj{V}, \calR(m))$ is generated by $H^0(\Proj{V},\calR(d))$.
\end{proof}

\subsection{} With $V$ an $A$-module, we let $f_A\colon \Proj{V} \ra \Spec(A)$ denote the structure sheaf. If $\varphi \colon \Spec(B) \ra \Spec(A)$ is a morphism of  schemes, we get the Cartesian diagram
$$ \xymatrix{ \Proj{V_B} \ar[d]_{f_B} \ar[r]^{\varphi'} & \Proj{V} \ar[d]^{f_A} \\\Spec(B) \ar[r]^{\varphi} & \Spec(A), }$$
where $V_B=V\bigotimes_AB$.
\begin{prop}\label{sheaves on PV} Let $V$ be a projective $A$-module of finite rank, with $A$ a Noetherian ring. Let $R\subseteq F=\bigoplus_{i=1}^m\Sym_A(V)$ be a submodule generated in degree $d$.  Let $\calE$ denote the sheaf on $\Proj{V}$ associated to the $\Sym_A(V)$-module $E=F/R$.
Assume that the $A$-module $E_d$ is projective of finite rank $n$, and that this rank $n\leq d$. Then we have that a morphism of schemes $\varphi \colon \Spec(B) \ra \Spec(A)$ factors via $\Spec(A/\ufitt{n-1}{d, \infty}(E))$ if and only if the two following conditions hold
\begin{enumerate}
\item The sheaf ${\varphi '}^*\calE$ is flat, finite of rank $n$ over $\Spec(B)$.
\item The $B$-module of global sections of ${\varphi '}^{*}\calE(m)$ is
$\xymatrix{ E_m\bigotimes_AB}$, as quotients of $F_m\bigotimes_AB$, for each $m\geq d$.
\end{enumerate}
\end{prop}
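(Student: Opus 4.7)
The strategy is to reduce to Corollary \ref{factorization} and then translate between graded-module and sheaf data. First I would observe that by Corollary \ref{factorization} (applied for every finite $s$ and the union then taken), the factorization of $\varphi\colon \Spec(B)\ra \Spec(A)$ through $\Spec(A/\ufitt{n-1}{d,\infty}(E))$ is equivalent to $E_m\bigotimes_AB$ being a projective $B$-module of rank $n$ for every $m\geq d$. It therefore suffices to show that this module-theoretic condition is equivalent to the sheaf-theoretic conditions (1) and (2).

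Assume first that $E_m\bigotimes_AB$ is projective of rank $n$ for all $m\geq d$, and set out to prove (1) and (2). For (1), I would cover $\Proj{V_B}$ by standard affine opens $D_+(f)$ with $f\in V_B$ of degree one; on such an open the sheaf ${\varphi '}^{*}\calE$ corresponds to the degree-zero part $(E\bigotimes_AB)_{(f)}$, which is the filtered colimit of the modules $(E\bigotimes_AB)_{n}$ along multiplication by $f$. Its tail with $n\geq d$ is a filtered colimit of flat $B$-modules, so ${\varphi '}^{*}\calE$ is flat over $\Spec(B)$. On each fiber $\kappa(\q)$, the graded module $E\bigotimes_A\kappa(\q)$ has constant dimension $n$ in degrees $\geq d$, so the fiber sheaf has Hilbert polynomial equal to $n$ and finite support of length $n$; combined with flatness, cohomology and base change (Corollary \ref{regular families} (4)) then forces $f_{B*}{\varphi '}^{*}\calE$ to be locally free of rank $n$, yielding (1).

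For (2), I would first argue that the kernel of $R\bigotimes_AB \ra F\bigotimes_AB$ is concentrated in degrees $<d$ (since $E_m\bigotimes_AB$ is $B$-flat for $m\geq d$) and hence sheafifies to zero, so the kernel of $\calF_B\ra {\varphi '}^{*}\calE$ coincides with $\widetilde{R\bigotimes_AB}$. By Proposition \ref{d regular} applied on each fiber, this kernel sheaf is $d$-regular. Invoking Lemma \ref{global sections} (2) on $0\ra \widetilde{R\bigotimes_AB}\ra \calF_B \ra {\varphi '}^{*}\calE \ra 0$ identifies $H^0(\Proj{V_B},{\varphi '}^{*}\calE(m))$, for $m\geq d$, with $F_m\bigotimes_AB$ modulo the graded submodule generated by $H^0(\Proj{V_B},\widetilde{R\bigotimes_AB}(d))$. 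To match this submodule with $R_m\bigotimes_AB$, I would compare on each fiber: Lemma \ref{global sections} (2) applied fiberwise, together with the dimension count in $F_d\bigotimes_A\kappa(\q)$, gives $H^0(\widetilde{R\bigotimes_AB}(d))\bigotimes_B\kappa(\q)=R_d\bigotimes_A\kappa(\q)$, and a Nakayama argument then promotes this to $H^0(\Proj{V_B},\widetilde{R\bigotimes_AB}(d))=R_d\bigotimes_AB$. The degree-$d$ generation hypothesis on $R$ upgrades the identification to every degree $m\geq d$, yielding (2).

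Conversely, if (1) and (2) hold, then Proposition \ref{d regular} applied fiberwise makes ${\varphi '}^{*}\calE$ fiberwise $0$-regular, and Corollary \ref{regular families} makes $f_{B*}{\varphi '}^{*}\calE(m)$ locally free of rank $n$ for $m\geq d$; by (2) this module is $E_m\bigotimes_AB$, recovering the module-theoretic condition. The hard part will be the identification $H^0(\Proj{V_B},\widetilde{R\bigotimes_AB}(d))=R_d\bigotimes_AB$ in the forward direction: although $d$-regularity and base change make the pushforward well-behaved on the kernel sheaf, pinning down the generating submodule precisely requires combining the fiberwise use of Lemma \ref{global sections} (2) with the projectivity of $E_m\bigotimes_AB$ in degrees $\geq d$.
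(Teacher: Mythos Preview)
Your strategy is the same as the paper's: reduce to Corollary~\ref{factorization}, then use Lemma~\ref{global sections} together with the regularity package (Proposition~\ref{d regular}, Corollary~\ref{regular families}) to pass between the module-theoretic and sheaf-theoretic conditions. The converse direction matches the paper exactly, and your treatment of (1) simply spells out what the paper asserts in one sentence.

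One point needs correcting. In your argument for (2) you claim the kernel of $R\otimes_A B \to F\otimes_A B$ vanishes in degrees $\geq d$ because ``$E_m\otimes_A B$ is $B$-flat''. That kernel is governed by $\mathrm{Tor}_1^A(E_m,B)$, so what you would need is $A$-flatness of $E_m$, which you do not have for $m>d$. Fortunately this step is unnecessary: the sheaf ${\varphi'}^{*}\calE$ is associated to $E\otimes_A B=(F\otimes_A B)/R'$, where $R'$ is the \emph{image} of $R\otimes_A B$ in $F\otimes_A B$, and $R'$ is again generated in degree $d$ by $R'_d=R_d\otimes_A B$ (the latter because $E_d$ is $A$-projective, so $0\to R_d\to F_d\to E_d\to 0$ stays exact after $-\otimes_A B$). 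Working with $R'$ throughout, your application of Lemma~\ref{global sections}(2) and the subsequent fibrewise comparison go through unchanged. In fact, once you know $H^0({\varphi'}^{*}\calE(m))$ is locally free of rank $n$ (from Corollary~\ref{regular families}), the natural surjection $E_m\otimes_A B \twoheadrightarrow H^0({\varphi'}^{*}\calE(m))$ between rank-$n$ projective $B$-modules is automatically an isomorphism, which is the quick route the paper has in mind; your Nakayama/fiberwise argument reaches the same conclusion.
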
 

\begin{proof}  If a  morphism of schemes $\varphi \colon \Spec(B) \ra \Spec(A)$ factors via $\Spec(A/\ufitt{n-1}{d,\infty}(E))$, then we have by Corollary \ref{factorization} that the $B$-modules $E_m\bigotimes_AB$ are projective of rank $n$, for all $m\geq d$. Hence the sheaf ${\varphi '}^*\calE$, which we obtain from the $\Sym_B(V_B)$-module $E\bigotimes_AB$, is flat, finite of  rank $n$ over the base. The  second assertion follows by Lemma \ref{global sections} (2). 

Conversely, let $\varphi \colon \Spec(B) \ra \Spec(A)$ be a morphism. By assumption the sheaf ${\varphi '}^{*}\calE=\calE'$ on $\Proj{V_B}$ is flat, finite and of rank $n$ over $B$. By Corollary \ref{vanishing of direct images} the sheaves $f_{B *}\calE'(m)$ are locally free for every $m\geq n$. By assumption the $B$-modules $E_m\bigotimes_AB$ are the global sections of $\calE'(m)$. Hence the $B$-modules $E_m\bigotimes_AB$ are projective of rank $n$, and the factorization of $\varphi \colon \Spec(B) \ra \Spec(A)$ follows by Corollary \ref{factorization}
\end{proof}

\subsection{Fitting strata}  For each quasi-coherent sheaf $\calE'$ of finite type on a scheme $S$, we let $\schFitt _n(\calE')\subseteq S$ denote the closed subscheme defined by the $n$'th Fitting ideal sheaf $\Fitt_n(\calE')$.



\subsection{The sheaf cone} Let $\calV$ be a locally free sheaf of finite rank on a Noetherian scheme $S$, and assume that $f\colon \Proj{\calV}\ra S$ comes with a very ample invertible sheaf $\calO(1)$. Let $\calF$ be the free rank $p$ sheaf on $\Proj{\calV}$, and let $d\geq 0$ be an integer. Assume that we have, on $S$,  a short exact sequence of coherent sheaves 
\begin{equation}\label{up n down}
\xymatrix{ 0 \ar[r] & \calR_d \ar[r]& f_*\calF(d) \ar[r] & \calE_d \ar[r] & 0.} 
\end{equation}
The canonical map $f_*{\calF}(d)\bigotimes_Sf_*{\calO}(1)\ra f_*{\calF}(d+1)$ is surjective (Corollary \ref{regular families}). We let $\calR_{d+1}\subseteq f_*{\calF}(d+1)$ denote the image of the induced map
$$\xymatrix{\calR_d\bigotimes_Sf_*{\calO}(1) \ar[r] & f_*{\calF}(d+1).}$$
By Lemma \ref{global sections} we have that $\calV=f_*\calO(1)$, and we have on $S$, an exact sequence of $\calS ym_{\calO_S}(\calV)$-graded quasi-coherent sheaves  
$$\xymatrix{ 0 \ar[r] & \calR=\bigoplus_{m\geq d}\calR_m  \ar[r]& \bigoplus_{m\geq d}f_*\calF(m) \ar[r] & \calE \ar[r] & 0}.$$
The sheaf $\calE=\bigoplus_{m\geq d}\calE_m$ is by definition the graded quotient of the two other sheaves. We refer to $\calE$ as the sheaf cone associated to the short exact sequence \ref{up n down}.



\begin{thm}\label{main result} Let $V$ be a projective and finitely generated module on a Noetherian ring $A$. Let $\calF$ denote the free sheaf of rank $p$ on $f\colon \Proj{V}\ra \Spec(A)=S$. Fix two integers $n\leq d$, and let $\operatorname{G}\ra S$ denote the Grassmann scheme parametrizing locally free rank $n$ quotients of  $f_*\calF(d)$. Let
$$\xymatrix{ 0\ar[r]& \calR_d \ar[r] & f_*{\calF}(d) \ar[r] & \calE_d \ar[r] & 0}$$
denote the universal short exact sequence on the Grassmannian $\operatorname{G}$, and let $\calE$ denote the associated sheaf cone. Then we have that 
$$\xymatrix{ \mathrm{Quot}^n_{\calF/ \Proj{V}/S} =\bigcap_{s\ge 1}\mathrm{Fitt}_{n-1}(\calE_{d+s}).}$$
\end{thm}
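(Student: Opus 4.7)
The strategy is to compare both sides as subfunctors of the functor of points of the Grassmannian $G$, and show that their $B$-valued points coincide for every Noetherian $A$-algebra $B$. This simultaneously establishes representability of the Quot functor and the claimed equality as closed subschemes.

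First I would construct the natural monomorphism $\iota\colon \mathrm{Quot}^n_{\calF/\Proj{V}/S} \to G$. The free sheaf $\calF=\calO^{\oplus p}$ is fiber-wise $0$-regular, and Proposition \ref{d regular}(2) shows that the kernel of any quotient with constant Hilbert polynomial $n$ is $n$-regular; since $d \geq n$, Corollary \ref{vanishing of direct images} then supplies the map, sending a quotient $\calF \to \calE$ to the locally free rank-$n$ quotient $f_*\calF(d) \to f_*\calE(d)$. To see $\iota$ is a monomorphism, note that two quotients whose images in $G$ agree have kernels with the same degree-$d$ global sections; being $d$-regular, these kernels are generated in degree $d$ by Corollary \ref{regular families}(2), so they coincide as subsheaves of $\calF$.

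Next I would show the image of $\iota$ lies inside $Z := \bigcap_{s\ge 1}\schFitt_{n-1}(\calE_{d+s})$. Given a Quot point over $B$ corresponding to a quotient sheaf $\calE'$ on $\Proj{V_B}$, the resulting $B$-point of $G$ is $f_{B*}\calE'(d)$, and Lemma \ref{global sections}(2) identifies the components of the induced sheaf cone as $E_m = f_{B*}\calE'(m)$ for $m \geq d$. Since $\calE'$ is flat of rank $n$ and fiber-wise $0$-regular, Corollary \ref{regular families}(3) yields that each $f_{B*}\calE'(m)$ is locally free of rank $n$, so $\Fitt_{n-1}(E_m\otimes_A B) = 0$ for every $m \geq d$, and $\iota$ factors through $Z$.

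For the reverse inclusion, I would recover a Quot point from a $B$-point of $Z$. The defining conditions, together with the trivial vanishing at $s=0$ (since $E_d$ is already locally free of rank $n$), translate into the statement that $\Spec(B)\to G$ factors through $\Spec(A/\ufitt{n-1}{d,\infty}(E))$; here one uses that the scheme-theoretic intersection of the Fitting strata is cut out by the sum of their defining ideals, which is precisely the union of Fitting ideals from Section 2. Since $n \leq d$, Proposition \ref{sheaves on PV} then produces a flat, finite quotient sheaf of rank $n$ on $\Proj{V_B}$, i.e.\ a $B$-point of the Quot functor, and by the construction of the sheaf cone this point is sent by $\iota$ back to the chosen $B$-point of $Z$.

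The main obstacle is controlling Fitting ideals and Castelnuovo-Mumford regularity simultaneously under arbitrary base change, which is exactly what Proposition \ref{sheaves on PV} is engineered to handle: its hypothesis $n \leq d$ matches the standing assumption, and it lets Corollary \ref{factorization} promote the vanishing of all relevant Fitting ideals into projectivity of every graded piece $E_m\otimes_A B$ for $m \geq d$, which together with Lemma \ref{global sections} yields both the flatness of the constructed sheaf and the correct identification of its twisted global sections. Once this functorial dictionary between Fitting data and flat quotient sheaves is established, the equality of the two subschemes of $G$ is immediate.
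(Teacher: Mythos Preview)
Your proposal is correct and follows essentially the same route as the paper: both arguments compare $B$-valued points, use Lemma \ref{global sections} to identify the graded pieces of the sheaf cone with the twisted global sections of a Quot point, and invoke Proposition \ref{sheaves on PV} (via Corollary \ref{factorization}) to translate the vanishing of all the Fitting ideals into flatness of the associated sheaf. The only organizational difference is that you first establish $\iota$ as a monomorphism and then exhibit its inverse, whereas the paper first builds the map $s$ from the Fitting intersection to the Quot functor and then checks that $\iota$ and $s$ are mutually inverse; the content is the same.
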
 

\begin{proof}  Let $\varphi \colon T\ra S$ be a morphism of schemes. We may assume $T=\Spec(B)$ is affine. Then $\varphi^*f_*\calF(d)$ corresponds to $F_d$, where $F=\bigoplus_{i=1}^p\Sym_B(V\bigotimes_AB)$. Let $\varphi^*f_*\calF(d)\ra \calE$ be a $T$-valued point of the Grassmann scheme, and let $\calR_d$ denote its kernel. We then obtain a graded $\Sym_B(V_B)$-submodule $R\subseteq F$ generated in degree $d$ by the $B$-module $R_d$ that corresponds to the kernel sheaf $\calR_d$. Let $E=F/R$. By Proposition (\ref{sheaves on PV}) we have that the graded module $E$ gives a sheaf on $\Proj{V_B}$ whose restriction to $\Proj{V}\times_A\Spec(B/\ufitt{n-1}{d, \infty}(E))$ is flat, finite and of rank $n$ over $\Spec(B/\ufitt{n-1}{d, \infty}(E))$. This shows that we have a morphism 
$$ \xymatrix{ s\colon  \bigcap_{s\geq1} \mathrm{Fitt}_{n-1}(\calE_{d+s}) \ra \mathrm{Quot}^n_{\calF/\Proj{\calV}/S}}.$$

If we instead were given a $T=\Spec(B)$-valued point of the Quot scheme $\mathrm{Quot}^n_{\calF/\Proj{\calV}/S}$, then we have the following sequence
$$ \xymatrix{ 0\ar[r] & \calR \ar[r] & \calF_T=\bigoplus_{i=1}^p\calO_T \ar[r] & \calE \ar[r] & 0,}$$
where $\calE$ is flat, finite of rank $n$ over $T$. Let $R\subseteq F=\bigoplus_{i=1}^p\Sym_B(V\bigotimes_AB)$ denote the $\Sym_B(V_B)$-module generated in degree $d$ by the $B$-module $R_d=H^0(\Proj{V_B}, \calR(d))$.  We have by Lemma (\ref{global sections}) that the sheaf associated to the graded module $E=F/R$ is $\calE$.  This implies, by Proposition \ref{sheaves on PV} that the natural map \ref{iota} factorizes via $\cap_{s \ge 1}\mathrm{Fitt}_{n-1}(\calE_{d+s})\subseteq \operatorname{G}$, and it also implies that the map $\iota$ is the inverse of $s$.
\end{proof}

\begin{cor}\label{topos} With the assumptions as in the Theorem. We have an equality of the underlying topological spaces
$$ \mid \mathrm{Quot}^n_{\calF/\Proj{V}/S}\mid =\mid \mathrm{Fitt}_{n-1}(\calE_{d+1})\mid.$$
That is, the scheme $\mathrm{Fitt}_{n-1}(\calE_{d+1})$ cuts out set theoretically the Quot scheme $\mathrm{Quot}^n_{\calF/\Proj{V}/S}$.
\end{cor}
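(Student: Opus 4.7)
The plan is to deduce the set-theoretic equality from the main theorem together with Gasharov's proposition applied pointwise. The inclusion $\mid \mathrm{Quot}^n_{\calF/\Proj{V}/S}\mid \subseteq \mid \mathrm{Fitt}_{n-1}(\calE_{d+1})\mid$ is immediate from Theorem \ref{main result}, since the Quot scheme is cut out as the intersection of all $\mathrm{Fitt}_{n-1}(\calE_{d+s})$. For the reverse inclusion I would work pointwise, showing that an arbitrary field-valued point $x\colon\Spec\kappa\to G$ lying in $\mathrm{Fitt}_{n-1}(\calE_{d+1})$ automatically lies in every stratum $\mathrm{Fitt}_{n-1}(\calE_{d+s})$ for $s\geq 1$, and hence in $\mathrm{Quot}^n$ by Theorem \ref{main result}. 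Pulling the universal sequence back to $\Spec\kappa$ gives a graded submodule $R\subseteq F=\bigoplus_{i=1}^p\Sym_\kappa(V_\kappa)$ generated in degree $d$, with $\dim_\kappa E_d=n$ where $E=F/R$. Since Fitting ideals commute with base change and $E_{d+1}$ is a finite-dimensional $\kappa$-vector space, the hypothesis $x\in\mathrm{Fitt}_{n-1}(\calE_{d+1})$ translates to $\dim_\kappa E_{d+1}\geq n$.

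Combining this with Proposition \ref{Gasharov} (1) and Remark \ref{constant expansion}, which together give the matching upper bound $\dim_\kappa E_{d+1}\leq n^{\langle d\rangle}=n$, forces $\dim_\kappa E_{d+1}=n$. From here I would prove by induction on $s$ that $\dim_\kappa E_{d+s}=n$ for every $s\geq 1$. For the inductive step, assuming $\dim_\kappa E_{d+s-1}=\dim_\kappa E_{d+s}=n$, consider the tail submodule $R^{(s)}=\bigoplus_{m\geq d+s-1}R_m$; since $R$ is generated in degree $d$, the truncation $R^{(s)}$ is generated in degree $d+s-1$, and the quotient $F/R^{(s)}$ agrees with $E$ in every degree $\geq d+s-1$. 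The identity $\dim_\kappa(F/R^{(s)})_{d+s}=n=n^{\langle d+s-1\rangle}$ from Remark \ref{constant expansion} is precisely the hypothesis of Proposition \ref{Gasharov} (2) applied to $R^{(s)}$, whose conclusion delivers $\dim_\kappa E_{d+s+1}=n^{\langle d+s\rangle}=n$. In particular $\mathrm{Fitt}_{n-1}(E_{d+s})=0$ for every $s\geq 1$, completing the pointwise argument.

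The hard part is really just the bookkeeping around the inductive step: checking that the truncated submodule being fed into Proposition \ref{Gasharov} (2) remains generated in the correct degree, and that the numerical identity $n^{\langle m\rangle}=n$ persists for every $m\geq d$. Both are handled by Remark \ref{constant expansion} and by the elementary observation that the tail $\bigoplus_{m\geq e}R_m$ of a submodule generated in degree $d$ is itself generated in degree $e$ whenever $e\geq d$, so no further conceptual input is needed beyond Gasharov's proposition and the main theorem.
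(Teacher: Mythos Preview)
Your argument is correct and follows the same approach as the paper. The paper's proof reduces to the same pointwise field computation and then simply writes ``which follows from Proposition~\ref{Gasharov}~(2)'', whereas you spell out the induction on $s$ explicitly, including the truncation $R^{(s)}$ needed to ensure the generated-in-degree hypothesis is satisfied at each step; this is precisely the bookkeeping the paper leaves implicit.
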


\begin{proof} Let $\Spec(B)$ be an open subset of $G$. Let $R\subseteq F=\bigoplus_{i=1}^p\Sym_B(V_B)$ be the graded submodule generated in degree $d$ by the $B$-module $R_d$ that corresponds to the restriction of the sheaf $\calR_d$ to $\Spec(B)$. By the Theorem we need to show that the elements defining the ideals $\Fitt_{n-1}(E_{m})$ (with $m\geq d+2$) are nilpotent in the ring
$$ B/\Fitt_{n-1}(E_{d+1}),$$
where $E=F/R$. Or equivalently, for any morphism $f\colon B \ra L$ to a field $L$, we have that $\Fitt_{n-1}(E_{m}\bigotimes_BL)=0$ for $m\geq d+2$. This again is equivalent with the vector space $E_m\bigotimes_BL$ having dimension $n$, which follows from the Proposition \ref{Gasharov} (2).
\end{proof}


\begin{cor}\label{GG} With the assumptions as in the Theorem. We have that
$$ \mathrm{Hilb}^n_{\Proj{V}/S}=\mathrm{Fitt}_{n-1}(\calE_{d+1}).$$
\end{cor}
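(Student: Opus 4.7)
The approach is to combine Theorem \ref{main result} (specialized to $\calF = \calO$, i.e., $p=1$) with Gotzmann's Persistence Theorem. Theorem \ref{main result} yields $\mathrm{Hilb}^n_{\Proj{V}/S} = \bigcap_{s \geq 1} \mathrm{Fitt}_{n-1}(\calE_{d+s})$, so the inclusion $\mathrm{Hilb}^n_{\Proj{V}/S} \subseteq \mathrm{Fitt}_{n-1}(\calE_{d+1})$ is automatic; the content of the corollary is the reverse inclusion as closed subschemes.

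Working locally, I would fix an affine open $\Spec(B)\subseteq G$, the universal graded ideal $R\subseteq F=B[X_0,\ldots,X_r]$ generated in degree $d$, and $E=F/R$. Setting $\overline{B}=B/\Fitt_{n-1}(E_{d+1})$, it suffices to verify that $\Fitt_{n-1}(E_{d+s})\cdot \overline{B}=0$ for every $s\geq 1$. First, $E_{d+1}\otimes_B\overline{B}$ is locally free of rank $n$ over $\overline{B}$: the bound $\dim E_{d+1}\otimes\kappa\leq n^{\langle d\rangle}=n$ from Proposition \ref{Gasharov}(1) and Remark \ref{constant expansion}, paired with the defining vanishing $\Fitt_{n-1}(E_{d+1})\cdot\overline{B}=0$, gives pointwise equality by the argument of Corollary \ref{vector space and free}, and Proposition \ref{Fitting} then promotes this to local freeness.

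The decisive input is the scheme-theoretic form of Gotzmann's Persistence Theorem in the polynomial-ring setting: if $R\subseteq \overline{B}[X_0,\ldots,X_r]$ is a graded ideal generated in degree $d$ with $E_{d+1}=F_{d+1}/R_{d+1}$ locally free of rank $n\leq d$, then $E_m=F_m/R_m$ is locally free of rank $n$ for every $m\geq d$. This sharpening of the field-case Gotzmann is classical and is found in the treatments of Gotzmann \cite{Gotzmann}, Iarrobino--Kleiman \cite{IarrobinoKleiman}, and Haiman--Sturmfels \cite{HaimanSturmfels}. Granted, a final application of Corollary \ref{vector space and free} yields $\Fitt_{n-1}(E_{d+s})\cdot\overline{B}=0$ for all $s\geq 1$, as required.

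The main obstacle is precisely this scheme-theoretic Gotzmann input. The field-case Gotzmann, used in Corollary \ref{topos} via Proposition \ref{Gasharov}(2), only gives pointwise constant dimensions, hence merely nilpotence of the higher Fitting ideals modulo $\Fitt_{n-1}(E_{d+1})$; upgrading to strict vanishing, i.e. to a scheme-theoretic equality of closed subschemes, is genuinely stronger and is specific to the Hilbert case $p=1$, where one works with ideals in a polynomial ring rather than with submodules of a free module of higher rank.
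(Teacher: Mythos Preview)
Your proposal is correct and follows essentially the same route as the paper: work locally on the Grassmannian, pass to $\overline{B}=B/\Fitt_{n-1}(E_{d+1})$, observe that $E_d$ and $E_{d+1}$ become projective of rank $n$ there, and then invoke Gotzmann's Persistence Theorem to propagate this to all $E_m$ with $m\geq d$, whence the higher Fitting ideals vanish and Theorem~\ref{main result} concludes. The paper's proof is terser---it writes ``by construction'' for the step you unpack via Corollary~\ref{vector space and free}, and it cites \cite{Gotzmann} without flagging, as you rightly do, that one needs the persistence statement over an arbitrary Noetherian base ring rather than just a field.
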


\begin{proof} Using the notation as in the proof of previous Corollary, with $\Spec(B')$ an open subset of the Grassmannian. Let $B=B'/\ufitt{n-1}{1}(E_d)$. We have by construction that the $B$-modules that $E_m\bigotimes_AB$, where $m=d, d+1$, are projective of rank $n$. It then follows by Gotzmanns persistence Theorem \cite{Gotzmann} that $E_m\bigotimes_{B'}B$ are projective for all $m\geq d$. In particular we get that $\Fitt_{n-1}(E_m\bigotimes_{B'}B)=0$, and the result follows from the Theorem. 
\end{proof}

\begin{cor} Let $X\ra \Spec(A)=S$ be a projective morphism, and let $\calO_X(1)$ be a very ample line bundle. Let $\calF$ be a coherent sheaf on $X$. Then the functor $\mathrm{Quot}^n_{\calF/X/S}$ is representable by a projective scheme.
\end{cor}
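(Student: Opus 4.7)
The plan is to reduce to Theorem~\ref{main result} in two steps: first replace $X \to S$ by a projective space, then replace the coherent sheaf $\calF$ by a free sheaf on that projective space.

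Since $\calO_X(1)$ is very ample there is a closed immersion $i \colon X \hookrightarrow \Proj{V}$ with $V = A^{r+1}$ and $\calO_X(1) = i^*\calO(1)$. Any quasi-coherent quotient of $i_*\calF$ on $\Proj{V}$ is annihilated by the ideal sheaf of $X$ and hence arises by pushforward from a quotient of $\calF$ on $X$, so pushforward along $i$ induces a natural isomorphism of functors $\mathrm{Quot}^n_{\calF/X/S} \cong \mathrm{Quot}^n_{i_*\calF/\Proj{V}/S}$ and we may assume $X = \Proj{V}$. By Serre's theorem there is an integer $N \geq 0$ and a surjection $\calO^{\oplus p} \twoheadrightarrow \calF(N)$. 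Twisting by $\calO(N)$ is an auto-equivalence on categories of quotient sheaves preserving $S$-flatness, finite support, and the constant Hilbert polynomial $n$, so $\mathrm{Quot}^n_{\calF/\Proj{V}/S} \cong \mathrm{Quot}^n_{\calF(N)/\Proj{V}/S}$. Writing $\calG = \calF(N)$ and $\mathscr{K} = \ker(\calO^{\oplus p} \twoheadrightarrow \calG)$, a $T$-valued quotient $\calO_T^{\oplus p} \to \calE$ of $\mathrm{Quot}^n_{\calO^{\oplus p}/\Proj{V}/S}$ factors through $\calG_T$ precisely when the composite $\mathscr{K}_T \to \calE$ vanishes, exhibiting $\mathrm{Quot}^n_{\calG/\Proj{V}/S}$ as a subfunctor of $\mathrm{Quot}^n_{\calO^{\oplus p}/\Proj{V}/S}$.

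The main step is to check that this subfunctor is closed. Let $Q = \mathrm{Quot}^n_{\calO^{\oplus p}/\Proj{V}/S}$ carry the universal quotient $\calO_Q^{\oplus p} \to \calE_u$ on $Q \times_S \Proj{V}$, and let $\pi$ denote the projection to $Q$. For $d$ large enough (by Serre's theorem on the projective $Q$-scheme $Q \times_S \Proj{V}$) that $\mathscr{K}_Q(d)$ is globally generated by its pushforward $\pi_*\mathscr{K}_Q(d)$ with no higher direct images, and such that $\pi_*\calE_u(d)$ is locally free of rank $n$ via Corollary~\ref{regular families}, the composite $\mathscr{K}_Q \to \calE_u$ vanishes exactly where the induced morphism $\pi_*\mathscr{K}_Q(d) \to \pi_*\calE_u(d)$ of coherent $\calO_Q$-modules vanishes, and the latter is evidently a closed subscheme of $Q$. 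Combined with Theorem~\ref{main result}, which realizes $Q$ as a closed subscheme of a Grassmannian over the affine base $S = \Spec(A)$, this exhibits $\mathrm{Quot}^n_{\calF/X/S}$ as a closed subscheme of a projective $S$-scheme, hence projective over $S$. The main obstacle is this closedness statement: one must choose $d$ to simultaneously meet Serre vanishing for $\mathscr{K}$ and fiber-wise regularity of $\calE_u$, after which the pushforward formalism translates a vanishing condition for a sheaf morphism on $Q \times_S \Proj{V}$ into one on the Noetherian base $Q$.
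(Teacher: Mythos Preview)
Your proof is correct and follows the same two-step reduction as the paper: first replace $X$ by $\Proj{V}$ via the closed immersion, then replace $\calF(N)$ by a free sheaf $\calO^{\oplus p}$ and invoke Theorem~\ref{main result}. The paper simply asserts that both reduction maps are ``representable closed maps'' (citing \cite{FGAIV}), whereas you spell out the closedness of the second step explicitly; you also sharpen the first step by observing that the map is actually an isomorphism of functors, not merely a closed immersion.

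One small technical remark on your closedness argument: you phrase the vanishing condition in terms of the map $\pi_*\mathscr{K}_Q(d) \to \pi_*\calE_u(d)$, but for this to represent the subfunctor you implicitly need base change for $\pi_*\mathscr{K}_Q(d)$, and $\mathscr{K}$ need not be flat over $S$. The cleanest fix is to choose $d$ so that $\mathscr{K}(d)$ is globally generated already on $\Proj{V}$ over $S$, giving a surjection $\calO^{m}(-d)\twoheadrightarrow \mathscr{K}$; then the vanishing of $\mathscr{K}_T\to\calE_{u,T}$ is equivalent to the vanishing of the induced map $\calO_Q^{m}\to\pi_*\calE_u(d)$ after base change to $T$, and only base change for $\pi_*\calE_u(d)$ (which you have via Corollary~\ref{regular families}) is required.
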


\begin{proof} The reduction to the ${\mathbb P}^r$ case is well-known, see (\cite{FGAIV}): Let $i\colon X\subseteq {\mathbb P}^r_A$ be the closed immersion given by ${\calO}_X(1)$. Then $i_*\calF$ is a coherent sheaf on ${\mathbb P}^r_A$, and it follows that the natural map
$$ \mathrm{Quot}^n_{\calF/X/S} \ra \mathrm{Quot}^n_{i_*\calF/{\mathbb P}^r/S}$$
is a representable closed map. Hence we may assume that $X={\mathbb P}^r_A$. There exists an integer $N$ such that ${\calF}(N)$ is generated by its global sections, thus we have a surjection $\bigoplus_{i=1}^r\calO \ra {\calF}(N)$.  It now follows that the natural map
$$ \mathrm{Quot}^n_{\calF(N)/{\mathbb P}^r/S}\ra \mathrm{Quot}^n_{\calO^{\oplus p}/{\mathbb P}^r/S}$$
is a representable closed map. Hence it follows from the Theorem that $\mathrm{Quot}^n_{\calF(N)/{\mathbb P}^r/S}$ is representable. As $\mathrm{Quot}^n_{\calF(N)/{\mathbb P}^r/S}=\mathrm{Quot}^n_{\calF/{\mathbb P}^r/S}$ we are done.
\end{proof}

\section{Quot schemes of the projective line}

We end by giving some precise description about the Quot scheme of the projective line, and by calculating some examples. Related, but different is the work \cite{StrommeCurves}.


\begin{prop}\label{two is enough} Let $A$ be a Noetherian ring, and $V$ a projective $A$-module of rank 2. Let $F=\bigoplus_{i=1}^p\Sym_A(V)$, and let $R\subseteq F$ be a $\Sym_A(V)$-module generated in degree $d$.  Assume that $F_m/R_m$ is a projective $A$-module of rank $n\leq d$, for $m=d, d+1$. Then $F_m/R_m$ is a  projective $A$-module of rank $n$ for all $m\geq d$.
\end{prop}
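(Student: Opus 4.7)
The plan is to reduce to the local case and proceed by induction on $m$, exploiting the unusually simple Koszul complex available when $V$ has rank two. Since projectivity of a finitely generated module is local on $\Spec(A)$, I would first localize and assume $(A, \mathfrak{m})$ is local, so that ``projective of rank $n$'' becomes ``free of rank $n$''. The cases $m = d$ and $m = d+1$ are given, so assume inductively that $E_{m-1}$ and $E_m$ are free of rank $n$ for some $m \geq d+1$; the goal is to deduce the same for $E_{m+1}$. Since $\mathrm{rk}\, V = 2$, the Koszul complex for $\Sym_A(V)$ furnishes a short exact sequence $0 \to F_{m-1} \otimes \bigwedge^2 V \to F_m \otimes V \to F_{m+1} \to 0$. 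Because $R$ is generated in degree $d$, the Koszul map sends $R_{m-1} \otimes \bigwedge^2 V$ into $R_m \otimes V$, and $R_m \otimes V \to R_{m+1}$ is surjective, so a diagram chase produces the right-exact sequence
\[
E_{m-1} \otimes \bigwedge^2 V \;\longrightarrow\; E_m \otimes V \;\longrightarrow\; E_{m+1} \;\longrightarrow\; 0.
\]

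By the induction hypothesis, $E_{m-1} \otimes \bigwedge^2 V$ is free of rank $n$ and $E_m \otimes V$ is free of rank $2n$ over $A$. To pin down $E_{m+1}$ I would reduce modulo $\mathfrak{m}$: applying Proposition~\ref{Gasharov}(2) together with Remark~\ref{constant expansion} repeatedly (using that the submodule $R_{\geq j}$ is generated in degree $j$ for each $j \geq d$) yields $\dim_\kappa (E_j \otimes \kappa) = n$ for every $j \geq d$. Counting dimensions in the above sequence tensored with $\kappa$, the image of the left-hand map must have dimension $2n - n = n$, so this map is injective modulo $\mathfrak{m}$.

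The final step is pure linear algebra: a map $A^n \to A^{2n}$ of free modules over a local ring whose reduction modulo $\mathfrak{m}$ is injective must have some $n \times n$ minor equal to a unit, hence is split injective with cokernel free of rank $n$. Applied to the Koszul-type map above, this identifies $E_{m+1}$ as free of rank $n$ and closes the induction. The real content of the proof lies in deriving the three-term sequence of $E$'s and in the Gasharov-based dimension count: the former uses crucially that $V$ has rank exactly two (so that the Koszul complex has only three terms), and the latter uses that the Macaulay expansion stabilizes, $n^{\langle j \rangle} = n$, in the regime $n \leq d$.
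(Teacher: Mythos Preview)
Your argument is correct and takes a genuinely different route from the paper. The paper works by hand: after localizing, it picks a basis $g_1,\ldots,g_t$ of $R_m$, shows that a suitable subset $Xg_1,\ldots,Xg_t,Yg_1,\ldots,Yg_p$ is a basis of $R_{m+1}$, and then checks explicitly that $R_{m+2}$ is spanned by the $t+2p=p(m+3)-n$ elements $X^2g_j,\,XYg_k,\,Y^2g_k$. This generator count forces $\Fitt_{n-1}(E_{m+2})=0$, and together with the Macaulay bound (Proposition~\ref{Gasharov}(1)) and Proposition~\ref{Fitting} it gives freeness. Your approach replaces this bookkeeping by the three-term Koszul sequence $0\to F_{m-1}\otimes\bigwedge^2 V\to F_m\otimes V\to F_{m+1}\to 0$, passes to the right-exact sequence on the $E$'s, and then uses Gasharov's persistence (Proposition~\ref{Gasharov}(2)) over the residue field to see that the first map is injective modulo $\mathfrak m$, hence split. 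Both arguments pinpoint the same reason rank two is special---in your language, the Koszul complex has only three terms; in the paper's language, the ``rearrangement'' step fails for more variables. Your version is more conceptual and makes this structural reason transparent, while the paper's version is more elementary in that it only invokes Proposition~\ref{Gasharov}(1) rather than the persistence statement~(2); indeed, in your argument one could equally well bound $\dim_\kappa(E_{m+1}\otimes\kappa)\le n$ by part~(1) and get the reverse inequality from the right-exact sequence itself, avoiding~(2) altogether.
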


\begin{proof} We may assume that $(A,\m)$ is a local ring, and that $\Sym_A(V)=A[X,Y]$. We will first show that if $E_{m}=F_m/R_m$ and $F_{m+1}/R_{m+1}$ are free of rank $n$, then $R_{m+2}$ is generated by $p(m+3)-n$ elements. Thus, we assume that $F_m/R_m$ is free of rank $n$. Then $R_m$ is free and we let $g_1, \ldots, g_t$ be a basis of $R_m$, with $t=p(m+1)-n$. It follows that the elements $Xg_1, \ldots, Xg_t$ in $R_{m+1}$ are linearly independent. Since we assume that also $F_{m+1}/R_{m+1}$ is free of rank $n$, we may assume, after possibly rearranging the order of $g_1, \ldots, g_t$, that 
\begin{equation}
\label{basis} 
Xg_1,  \ldots, Xg_t, Yg_1, \ldots, Yg_p
\end{equation}
is a basis of $R_{m+1}$. In particular we have that the remaining generators $Yg_{p+i}$ are in the linear span of the elements (\ref{basis}). In other words, 
\begin{equation}
\label{linear expression}
 Yg_{p+i}=\sum _{j=1}^t \alpha_{i,j}Xg_j +\sum_{k=1}^p \alpha_{i,k}Yg_k,
\end{equation}
for some scalars $\alpha_{i,j}$ and $\alpha_{i,k}$ in $A$, for all $i=1, \ldots, t-p$.   By definition the generators (\ref{basis}) of $R_{m+1}$ multiplied with the linear forms $X$ and $Y$ will generate $R_{m+2}$. We claim that the subset
\begin{equation}
\label{gens for n+2}
 X^2g_1, \ldots, X^2g_t, XYg_1, \ldots, XYg_p, Y^2g_1, \ldots, Y^2g_p
\end{equation}
generate $R_{m+2}$. The generators that we obtain by multiplying (\ref{basis}) with $X$ and $Y$, and that we have not listed above are the elements of the form $XYg_{p+i}$, with $i=1, \ldots, t-p$.  Using (\ref{linear expression}) we get that
$$ XYg_{p+i}=X\big( \sum _{j=1}^t \alpha_{i,j}Xg_j +\sum_{k=1}^p \alpha_{i,k}Yg_k\big),$$
and that the elements $XYg_{p+i}$ are redundant. Thus (\ref{gens for n+2}) generate $R_{m+2}$, and their cardinality is 
$$ t+2p=p(m+1)-n+2p=p(m+3)-n.$$
We then will conclude that $E_{m+2}$ is free. As we just have proved, the vector space dimension of $R_{m+2}\bigotimes_AA/\m$ is at most $p\cdot\operatorname{dim}F_{m+2} -n$, we have that the vector space dimension $\operatorname{dim}E_{m+2}\bigotimes_AA/\m \geq n$. By Proposition \ref{Gasharov} (1) it then follows that the vector space dimension of $E_{m+2}\bigotimes_A/A\m$ is exactly $n$.  Moreover, as $R_{m+2}$ is generated by 
$p\cdot \operatorname{dim}F_{m+2} -n$ elements the Fitting ideals $\Fitt_{n-s}(E_{m+2})$ are zero, for $s\geq 1$. In particular the $(n-1)$'th Fitting ideal $\Fitt_{n-1}(E_{m+2})=0$. It then follows by Proposition  \ref{Fitting} that $E_{m+2}$ is free of rank $n$. The result now follows by induction.
\end{proof}

\begin{rem} When $V$ is free of higher rank the same proof does not work, and the problem is the rearrangement done in \ref{basis}. 
\end{rem}

\begin{cor}\label{P1} Let $V$ be a projective $A$-module of rank 2, and let $R\subseteq F=\bigoplus_{i=1}^p\Sym_A(V)$ be a submodule that is generated in degree $d$. Let $E=F/R$, and  assume that the $A$-module $E_d$ is projective of rank $n$, and that $d\geq n$. Then we have equality of ideals
$$ \Fitt_{n-1}(E_{d+1})=\ufitt{n-1}{d,\infty}(E).$$
In particular we have that
$$ \mathrm{Quot}^n_{\calO^{\oplus p}/\Proj{V}/\Spec(A)}=\mathrm{Fitt}_{n-1}(\calE_{d+1}),$$
where $\calE_{d+1}$ is the $d+1$ component of the sheaf cone in Theorem \ref{main result}.
\end{cor}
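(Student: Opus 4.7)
The plan splits into two steps: first establish the ideal equality $\Fitt_{n-1}(E_{d+1}) = \ufitt{n-1}{d,\infty}(E)$, and then deduce the scheme-theoretic description of the Quot scheme from Theorem \ref{main result}. The inclusion $\Fitt_{n-1}(E_{d+1}) \subseteq \ufitt{n-1}{d,\infty}(E)$ is immediate from the definition. For the reverse inclusion, one has to show that $\Fitt_{n-1}(E_m) \subseteq \Fitt_{n-1}(E_{d+1})$ for every $m \geq d$. The case $m = d$ is free: the hypothesis that $E_d$ is projective of rank $n$ forces $\Fitt_{n-1}(E_d) = 0$. Thus the work lies in the range $m \geq d+1$.

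The key move is to pass to the quotient ring $B = A/\Fitt_{n-1}(E_{d+1})$ and argue that $E_m \otimes_A B$ is projective of rank $n$ for every $m \geq d$; this yields $\Fitt_{n-1}(E_m)\cdot B = 0$, which is exactly the wanted inclusion. Since $E_d$ is projective, the short exact sequence $0 \to R_d \to F_d \to E_d \to 0$ is split, so it base-changes cleanly to $B$, whence $R\otimes_A B \subseteq F \otimes_A B$ remains generated in degree $d$; moreover $V_B = V \otimes_A B$ is still projective of rank $2$ and $B$ remains Noetherian. The degree $d$ piece $E_d \otimes_A B$ is projective of rank $n$ by inheritance. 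For the degree $d+1$ piece, I would check projectivity locally: at any prime $\q$ of $B$ the fiber dimension of $E_d \otimes_A B$ is $n$, so Corollary \ref{vector space and free} applies and, combined with the vanishing $\Fitt_{n-1}(E_{d+1}) \cdot B = 0$ built into the definition of $B$, gives that $E_{d+1} \otimes_A B$ is free of rank $n$ at $\q$, hence projective of rank $n$ on $\Spec(B)$. Proposition \ref{two is enough} applied over $B$ then propagates projectivity of rank $n$ to every $E_m \otimes_A B$ with $m \geq d$, completing the ideal equality.

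For the geometric conclusion, Theorem \ref{main result} realises the Quot scheme as the closed subscheme of $\operatorname{G}$ cut out by the ideal sheaf $\sum_{s\geq 1}\Fitt_{n-1}(\calE_{d+s})$, since a scheme-theoretic intersection of closed subschemes corresponds to the sum of the defining ideal sheaves. On any affine open $\Spec(B') \subseteq \operatorname{G}$ the universal sheaf $\calE_d$ is locally free of rank $n$, so the hypotheses of the ideal equality above are met and the ideal sheaf collapses to $\Fitt_{n-1}(\calE_{d+1})$. This yields the stated identification $\mathrm{Quot}^n_{\calO^{\oplus p}/\Proj{V}/\Spec(A)} = \Fitt_{n-1}(\calE_{d+1})$. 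The subtle point, and the main thing to verify carefully, is the base change in the second step: without the projectivity of $E_d$ one cannot guarantee that $R \otimes_A B$ injects into $F\otimes_A B$ with image still generated in degree $d$, which is a prerequisite for both Corollary \ref{vector space and free} and Proposition \ref{two is enough} to be directly applicable over $B$.
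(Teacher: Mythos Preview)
Your proof is correct and follows essentially the same route as the paper: pass to $B=A/\Fitt_{n-1}(E_{d+1})$, use the Fitting/dimension criterion to see that $E_d\otimes_A B$ and $E_{d+1}\otimes_A B$ are projective of rank $n$, invoke Proposition~\ref{two is enough} to propagate to all $m\ge d$, and then feed the ideal equality into Theorem~\ref{main result}. The paper packages the projectivity and base-change steps through Corollary~\ref{factorization} rather than unpacking them via Corollary~\ref{vector space and free} and the explicit splitting argument you give, but this is purely cosmetic.
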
 
\begin{proof} By assumption $E_d$ is projective of rank $n$, so $\Fitt_{n-1}(E_d)=0$. By Proposition \ref{factorization} have that the modules $E_d$ and $E_{d+1}$ are projective when restricted to $A/\Fitt_{n-1}(E_{d+1})$. From Proposition \ref{two is enough} we then get that $E_m$ are projective $A/\Fitt_{n-1}(E_{d+1})$-modules as well, for $m\geq d$. The equality of  ideals $\Fitt_{n-1}(E_{d+1})=\ufitt{n-1}{\infty}(E_d)$ then follows from Proposition \ref{factorization}. Applying the result just proven to the sheaf cone in Theorem \ref{main result} gives the identification of the Quot scheme of the projective line.
\end{proof}

\begin{ex}\label{Hilb} {\bf The Hilbert scheme of points on the projective line.} Let $V$ be free $A$-module of rank $2$. The Quot scheme of finite, flat, locally free of rank $n$ quotients of the structure sheaf ${\calO}$ on ${\mathbb P}^1_A=\Proj{V}$ is often denoted 
$$\mathrm{Hilb}^n_{{\mathbb P}^1/\Spec(A)},$$
and referred to as the Hilbert scheme of $n$ points on the projective line. Consider the Grassmannian of locally free rank $n$ quotients of the free $A$-module of $n$-forms $S_n$ of $S=A[X,Y]=\Sym_A(V)$. Having a rank $n$ quotients is the same as having a rank one quotient of its  dual $\mathrm{Hom}_A(S_n,A)\backsimeq S_n$. We therefore have that the Grassmannian is identified with the dual projective $n$-space $G={\mathbb P}^n_A$. 

Let $\tilde{S_n}$ be the sheaf on $\Spec(A)$ corresponding to the $A$-module $S_n$. If we let $f\colon {\mathbb P}^1_G\ra G$ denote the projection map, then we have that $f_*{\calO}(n)$ corresponds to the pull-back sheaf $g^{*}\tilde{S_n}$.  On the Grassmanian $G$ we have the universal sequence
\begin{equation}\label{degree d}
 \xymatrix{0\ar[r] & \calR_n \ar[r] & f_*{\calO}(n) \ar[r] & \calE_n \ar[r] & 0.}
\end{equation}
The ${\calO}_G$-module $\calR_n$ is projective of rank one.  It is then easy to check that the image of $\calR_n\bigotimes \calO(1)$ in the sheaf $f_*{\calO}(n+1)$ of degree $n+1$ forms,  is projective of rank 2. That implies that the Fitting ideal $\calF itt_{n-1}(\calE_{n+1})$ is zero, by definition. By Corollary \ref{P1}, or using the Gotzmann Persistence Theorem, we get that the Grassmannian $G={\mathbb P}^n_A$ is the Hilbert scheme of $n$ points on the projective line.
\end{ex} 

\begin{ex}\label{Hilb2} {\bf The Hilbert scheme of hypersurfaces} There are other ways, different than the approach given in Example \ref{Hilb} to view the Hilbert scheme of points. The usual way to realize the Hilbert scheme is to consider hypersurfaces of degree $n$ in ${\mathbb P}^1$. The universal hypersurface $F(X,Y)=p_0X^n +p_1X^{n-1}Y +\cdots +p_nY^n$ has its coefficients parametrized by the projective $n$-space. One can show directly that the hypersurface given by $F(X,Y)$ in ${\mathbb P}^1\times_A{\mathbb P}^n$ is the universal family of $n$-points on the line, and we get that ${\mathbb P}^n$ is the Hilbert scheme $\mathrm{Hilb}^n_{{\mathbb P}^1/\Spec(A)}$.  This approach works for hypersurfaces in ${\mathbb P}^r$, not only the projective line, that is with $r=1$.  

If one takes the ideal sheaf generated by the kernel of the map \ref{degree d}, one gets ideal sheaf of the universal family. And this shows the connection between the usual approach and the Example \ref{Hilb}. 

For the particular case with the projective line, there exists another more geometric interpretation. One forms the $n$-fold product ${\mathbb P}^1 \times_A \cdots \times_A {\mathbb  P}^1$. The symmetric group on $n$ letters $\Sgot_d$ acts by permuting the factors, and one has that ${\mathbb P}^n_A$ is the geometric quotient. Algebraically this is obtained by taking a homogeneous version of the classical statement that the elementary symmetric functions are algebraically independent. The additional information we obtain this way is that we can consider the universal family $F(X,Y)$ described above as a product
$$ F(X,Y)= (a_1 X+b_1Y)\cdots (a_dX+b_dY)$$
of linear forms. Writing out the product gives that the elementary homogeneous invariant functions in $a_1, b_1, \cdots, a_d, b_d$ are the coefficients of the universal hypersurface $F(X,Y)$. Each linear form $a_iX+b_iY$ gives a closed subscheme of ${\mathbb  P}^1\times {\mathbb P}^1$, and corresponds therefore to a family of points in the line. The geometric way to consider the Hilbert scheme $\mathrm{Hilb}^n_{{\mathbb P}^1/\Spec(A)}$ is therefore to consider an arbitrary set of $n$ number of unordered points on the line. 
\end{ex}

\begin{ex}\label{P1xP1}{\bf Quot scheme of length one quotients.} Let $V$ be a free ${\mathbb Z}$-module of rank two. Consider the Grassmannian of locally free rank 1 quotients of the free module $V\bigoplus V$ of rank four. We have that this Grassmannian is the projective three space which we identify with $G=\operatorname{Proj}({\mathbb Z}[a,b,c,d])$. Let $f\colon {\mathbb P}^1\times G\ra G$ denote the projection map, and let $\calF$ denote the free rank 2 sheaf on ${\mathbb P}^1$. On the Grassmann scheme $G$ we have the universal sequence
\begin{equation}\label{sequence p3}
 \xymatrix{0\ar[r] &\calR_1 \ar[r] & f_*\calF(1) =\calV \bigoplus \calV \ar[r]& f_*\calO (1)=\calE_1\ar[r] & 0},
\end{equation}
where $\calV$ is the free ${\calO}_G$-module of rank two. In our identification of the Grassmannian $G$, we have that the global sections of $\calO(1)$ are $a, b, c $ and $d$ and the quotient map to ${\calO}(1)$ is represented by the matrix $\begin{bmatrix} a& b & c & d \end{bmatrix}$.

 We obtain the $\Sym_G(\calV)$-graded submodule $\calR\subseteq \Sym_G(\calV)\bigoplus \Sym_G(\calV)$ generated by $\calR_1$. Let $D_+(a)$ denote the basic open affine $\Spec(A)\subseteq G={\mathbb  P}^3$, where $ A={\mathbb Z}[\frac{b}{a},\frac{c}{a},\frac{d}{a}]$. The restriction of $\calV$ to $D_+(a)$ is given by the free $A$-module $V_A$ of rank two, and
when we restrict the sequence \ref{sequence p3} to $D_+(a)$, we get the exact sequence of $A$-modules
$$ \xymatrix{ 0\ar[r] &\bigoplus_{i=1}^3A \ar[r]^{\psi_1} & \bigoplus_{i=1}^2V_{A}\ar[r] & \calO(1)_{|A}=A\ar[r]& 0,}
$$ 
where the map $\psi_1$ is given by the matrix
$$ \psi_1 =\frac{1}{a}\begin{bmatrix} -b & -c & -d \\ a & 0 & 0 \\ 0 & a & 0 \\ 0 & 0 & a\end{bmatrix}.$$ 
If we let ${\mathbb Z}[X,Y]$ denote the coordinate ring of our projective line, and let $\{X,Y\}$ denote a basis of its one forms,  then the image of the map $\psi_1$ is generated by elements
$$ g_1=(-\frac{b}{a}X+Y,0), \quad g_2=(-\frac{c}{a}X, X) \quad \text{and}\quad g_3=(-\frac{d}{a}X, Y).$$ 
By multiplying the generators $g_1, g_2, g_3$ with the one forms $X$ and $Y$, obtain the generators of $R_2$ restricted to $D_+(a)$. If we let $\{X^2,XY,Y^2\}$ denote the basis of the quadratic forms in ${\mathbb Z}[X,Y]$, then we have the exact sequence
$$ \xymatrix{ \bigoplus_{i=1}^6 A \ar[r]^{\psi_2} & \bigoplus_{i=1}^6 \ar[r] & E_2 \ar[r] & 0.}$$
The $A$-module $E_2$ is simply the co-kernel of the map $\psi_2$, and the map $\psi_2$  is given by the matrix
$$ \psi_2 =\frac{1}{a}\begin{bmatrix} -b & 0 & -c & 0 & -d & 0\\
a & -b & 0 & -c & 0 & -d \\
0 & a & 0 & 0 & 0 & 0 \\
 0 & 0 & a & 0 & 0 & 0 \\
0 & 0 & 0&  a & a  & 0 \\
0 & 0 & 0 &0 &0 & a\end{bmatrix}.$$
 The determinant, which equals the zeroth Fitting ideal $\operatorname{Fitt}_0(E_2)$ of the quotient, one computes as $\det(\psi_2)=\frac{bc-ad}{a^2}$. Similar computations over the three remaining basic opens $D_+(b)$, $D_+(c)$ and $D_+(d)$, gives that the sought Fitting ideal is generated by $ad-bc$ in ${\mathbb Z}[a, b, c, d]$. By Corollary \ref{P1} we get that
$$ \operatorname{Proj}({\mathbb Z}[a, b, c, d]/(ad-bc))={\mathbb P}^1\times {\mathbb P}^1$$ 
is the Quot scheme
$$ \operatorname{Quot}^1_{\calO\bigoplus \calO /{\mathbb P}^1/\Spec({\mathbb Z})}.$$
\end{ex}

\begin{ex}\label{rfoldproduct} {\bf Quot scheme of length one quotients II.} Consider now the Grassmannian of rank one quotients of the free ${\mathbb Z}$-module $\bigoplus_{i=1}^pV$, where $V$ is the free ${\mathbb Z}$-module of rank two. The Grassmannian is  the projective $2p-1$ space ${\mathbb P}^{2p-1}$.

Let $\calO^{\oplus p}$ denote the trivial rank $p$ bundle on the projective line $\Proj{V}={\mathbb P}^1$ and let $f
\colon {\mathbb P}^1\times {\mathbb P}^{2p-1} \ra {\mathbb P}^{2p-1}$ denote the structure map.  We have that the Quot scheme $\mathrm{Quot}^1_{\calO^{\oplus p} /\Proj{V} /{\mathbb Z}}$ is given as a closed subscheme in $\mathrm{Proj}({\mathbb Z}[X_1, Y_1, \ldots, X_p, Y_p])={\mathbb P}^{2p-1}$. Based on the computations in Example \ref{P1xP1} we get that the quadratic equations $X_iY_j-X_jY_i$ will vanish over the closed locus defining the Quot scheme, for any pair $1\leq i<j\leq p$. We therefore have an inclusion of ideals
\begin{equation} \label{inclusion}
(X_iY_j-X_jY_i)_{1\leq i < j\leq p} \subseteq \Fitt_{0}(\calE_2)
\end{equation}
where $\Fitt _0(\calE_2)$ is the ideal that defines the Quot scheme as a closed subscheme of ${\mathbb P}^{2p-1}$ (Corollary \ref{P1}). The  quadratic equations $X_iY_j-X_jY_i$ describes the Segree embedding of ${\mathbb P}^{p-1}\times {\mathbb P}^1$ into projective $2p-1$ space. If the inclusion of ideals \ref{inclusion} was not an equality, then the Quot scheme would be a proper subscheme of ${\mathbb P}^{p-1}\times {\mathbb P}^1$. However, the product is an integral scheme, and any proper subscheme would have dimension strictly less than $p$. It is an easy fact to check that the Quot scheme is of (relative) dimension $p$, hence
$$ {\mathbb P}^{p-1} \times {\mathbb P}^1 =\mathrm{Quot}^1_{\calO^{\oplus p} / {\mathbb P}^1 / {\mathbb Z}}.$$
\end{ex}

\begin{rem}\label{Kleiman} We have that the Quot scheme $\mathrm{Quot}^1_{\calF' /X/S}=\Proj{\calF'}$ (see \cite{KleimanSitges}). Now, if $\calF'=f^*{\calF}$ for some coherent sheaf on $S$, then we have that $\Proj{f^*{\calF}}=\Proj{\calF}\times_SX$. With $\calF$ the free sheaf on $S$ and with $X={\mathbb P}^1$ we recover the description given in the examples above. We thank S. Kleiman for pointing this out.
\end{rem}

\begin{rem} From Example \ref{rfoldproduct} we obtain two defining descriptions of the same space: Let $V_r$ be the free rank $r+1$  sheaf on $\Spec(\mathbb{Z})$. Recall that if $f\colon S \ra \Spec(\mathbb{Z})$ is a scheme, then a morphism $S \ra {\mathbb P}^{r}$ to the projective line is the same as having a locally free rank one quotient sheaf
$$ \varphi \colon f^*V_r \ra \call $$
on $S$.  An $S$-valued point of the product ${\mathbb P}^{p-1} \times {\mathbb P}^1$ is therefore the same as having an ordered pair $(\varphi_1, \varphi_2)$ on line bundle quotients. As we saw in the Example \ref{rfoldproduct}, the product of $\mathbb{P}^{p-1}\times \mathbb{P}^1$ equals the Quot scheme $\mathrm{Quot}^1_{\calO^{\oplus p}/\mathbb{P}^1/{\Spec(\mathbb{Z})}}$. Consequently the ordered sequence $(\varphi_1, \varphi_2)$ is in a natural way the same as a coherent quotient of $\mathbb{P}\times S$-sheaves
$$ \xymatrix{ \bigoplus_{i=1}^p {\calO}_{\mathbb{P}^1\times S} \ra \calE},$$
where $\calE$ is flat and of relative rank one over $S$. It is unclear how to describe that natural correspondence.
\end{rem}

\begin{rem} It is unclear if the Quot functor of finite, locally free quotients of the trivial bundle of infinite rank is representable. That is, what happens when we let the rank $p$ of the free sheaf grow (see \cite{diBrino}).
\end{rem}

\begin{ex}\label{p2}{\bf Projective plane.} We end by giving the result concerning examples of Quot schemes of sheaves on spaces other than the projective line. Consider the Grassmannian ${\mathbb P}^5$ of rank one quotients of $V\bigoplus V$, where $V$ is the free ${\mathbb Z}$-module of one forms on the projective plane $V=H^0({\mathbb P}^2,{\calO}(1))$. We let $X,Y,Z$ be a basis of $V$. Inside the Grassmannian ${\mathbb P}^5$ we have the Quot scheme $Q=\mathrm{Quot}^1_{{\calO}^{\oplus 2}, {\mathbb P}^2}$, given as the intersection of a finite number of Fitting ideal strata. We will look at a certain standard open chart of the Grassmannian, namely the affine 5-space ${\mathbb A}^5$ where we have inverted the first coordinate. Let $A={\mathbb Z}[a_1, \ldots, a_5]$ be the polynomial ring. The universal family restricted to $\Spec(A)$ is then given by the matrix
$$ \begin{bmatrix} 1 & a_1 & a_2 & a_3 & a_4 &a_5\end{bmatrix}, $$
where the matrix represent the quotient map 
$$\xymatrix{V\bigoplus V\bigotimes_{\mathbb Z}A\ra A={\calO}(1)_{|\Spec(A)}}.$$
 The kernel will be generated by five elements, and generate a graded submodule $R\subseteq A[X,Y,Z]\bigoplus A[X,Y,Z]$. We let $E$ denote the graded quotient module, where $E_1=A$ is the universal family described above. Now, using the Macaulay2 software \cite{M2} one can check that the 0'th Fitting ideal $\Fitt_0(E_2)$ is the ideal
$$ (a_2a_4-a_1a_5, a_2a_3-a_5, a_1a_3-a_4) \subseteq A={\mathbb Z}[a_1, \ldots, a_5].$$
We then get that $ \mathrm{Fitt}_0(\calE_1)={\mathbb P}^2\times {\mathbb P}^1$
embedded into ${\mathbb P}^5$ via the Segree embedding. In particular we have that $\mathrm{Fitt}_0(\calE_1)$ is reduced, and it follows that
$$ \mathrm{Quot}^1_{\calO^{\oplus 2}/{\mathbb P}^2}={\mathbb P}^2\times {\mathbb P}^1.$$
In particular the equality of spaces (\ref{topos}) in fact is an equality of schemes in this case as well. Note also that the computations are aligned with the result mentioned in Remark \ref{Kleiman}.
\end{ex}


\bibliographystyle{dary}
\bibliography{paper}

\newcommand{\etalchar}[1]{$^{#1}$}
\providecommand{\bysame}{\leavevmode\hbox to3em{\hrulefill}\thinspace}
\providecommand{\MR}{\relax\ifhmode\unskip\space\fi MR }
\providecommand{\MRhref}[2]{%
  \href{http://www.ams.org/mathscinet-getitem?mr=#1}{#2}
}
\providecommand{\href}[2]{#2}
\begin{thebibliography}{BLMM11}

\bibitem[AK80]{AltmanKleiman}
Allen~B. Altman and Steven~L. Kleiman, \emph{Compactifying the {P}icard
  scheme}, Adv. in Math. \textbf{35} (1980), no.~1, 50--112.

\bibitem[{\'A}SS08]{HomogeneousHilb}
Amelia {\'A}lvarez, Fernando Sancho, and Pedro Sancho, \emph{Homogeneous
  {H}ilbert scheme}, Proc. Amer. Math. Soc. \textbf{136} (2008), no.~3,
  781--790 (electronic).

\bibitem[BH93]{BH}
Winfried Bruns and J{\"u}rgen Herzog, \emph{Cohen-{M}acaulay rings}, Cambridge
  Studies in Advanced Mathematics, vol.~39, Cambridge University Press,
  Cambridge, 1993.

\bibitem[BLMM11]{Extensors}
Jerome Brachat, Paolo Lella, Bernard Mourrain, and Roggero Margherita,
  \emph{Extensors and the {H}ilbert scheme}, {arXiv:math/1104.2007}.

\bibitem[Bou03]{Bourbaki4-7}
Nicolas Bourbaki, \emph{Algebra {II}. {C}hapters 4--7}, Elements of Mathematics
  (Berlin), Springer-Verlag, Berlin, 2003, Translated from the 1981 French
  edition by P. M. Cohn and J. Howie, Reprint of the 1990 English edition
  [Springer, Berlin; MR1080964 (91h:00003)].

\bibitem[Bri13]{diBrino}
Gennaro~Di Brino, \emph{The quot functor of a quasi-coherent sheaf},
  {arXiv:math/1212.4544v2}.

\bibitem[CFK01]{C-F&Kapranov}
Ionu{\c{t}} Ciocan-Fontanine and Mikhail Kapranov, \emph{Derived {Q}uot
  schemes}, Ann. Sci. \'Ecole Norm. Sup. (4) \textbf{34} (2001), no.~3,
  403--440.

\bibitem[FGI{\etalchar{+}}05]{FGAexplained}
Barbara Fantechi, Lothar G{\"o}ttsche, Luc Illusie, Steven~L. Kleiman, Nitin
  Nitsure, and Angelo Vistoli, \emph{Fundamental algebraic geometry},
  Mathematical Surveys and Monographs, vol. 123, American Mathematical Society,
  Providence, RI, 2005, Grothendieck's FGA explained.

\bibitem[Gas97]{Gasharov}
Vesselin Gasharov, \emph{Extremal properties of {H}ilbert functions}, Illinois
  J. Math. \textbf{41} (1997), no.~4, 612--629.

\bibitem[GD71]{EGA1}
A.~Grothendieck and J.~Dieudonn\'e, \emph{\'{E}l\'ements de g\'eom\'etrie
  alg\'ebrique. {I}. {L}e langage des sch\'emas}, vol. 166, Springer Verlag,
  Berlin, 1971, 2nd ed.

\bibitem[Got78]{Gotzmann}
Gerd Gotzmann, \emph{Eine {B}edingung f\"ur die {F}lachheit und das
  {H}ilbertpolynom eines graduierten {R}inges}, Math. Z. \textbf{158} (1978),
  no.~1, 61--70.

\bibitem[Gro63]{EGA3II}
A.~Grothendieck, \emph{\'{E}l\'ements de g\'eom\'etrie alg\'ebrique. {III}.
  \'{E}tude cohomologique des faisceaux coh\'erents. {II}}, Inst. Hautes
  \'Etudes Sci. Publ. Math. (1963), no.~17, 91.

\bibitem[Gro95]{FGAIV}
Alexander Grothendieck, \emph{Techniques de construction et th\'eor\`emes
  d'existence en g\'eom\'etrie alg\'ebrique. {IV}. {L}es sch\'emas de
  {H}ilbert}, S\'eminaire {B}ourbaki, {V}ol.\ 6, Soc. Math. France, Paris,
  1995, pp.~Exp.\ No.\ 221, 249--276.

\bibitem[GS]{M2}
Daniel~R. Grayson and Michael~E. Stillman, \emph{Macaulay2, a software system
  for research in algebraic geometry}, Available at
  http://www.math.uiuc.edu/Macaulay2/.

\bibitem[Har77]{H52}
Robin Hartshorne, \emph{Algebraic geometry}, Springer-Verlag, New York, 1977,
  Graduate Texts in Mathematics, No. 52.

\bibitem[HS04]{HaimanSturmfels}
Mark Haiman and Bernd Sturmfels, \emph{Multigraded {H}ilbert schemes}, J.
  Algebraic Geom. \textbf{13} (2004), no.~4, 725--769.

\bibitem[IK99]{IarrobinoKleiman}
Anthony Iarrobino and Vassil Kanev, \emph{Power sums, {G}orenstein algebras,
  and determinantal loci}, Lecture Notes in Mathematics, vol. 1721,
  Springer-Verlag, Berlin, 1999, Appendix C by Iarrobino and Steven L. Kleiman.

\bibitem[Kle90]{KleimanSitges}
Steven~L. Kleiman, \emph{Multiple-point formulas. {II}. {T}he {H}ilbert
  scheme}, Enumerative geometry ({S}itges, 1987), Lecture Notes in Math., vol.
  1436, Springer, Berlin, 1990, pp.~101--138.

\bibitem[Mac]{Macaulay}
F.~S. MacAulay, \emph{Some {P}roperties of {E}numeration in the {T}heory of
  {M}odular {S}ystems}, Proc. London Math. Soc. \textbf{S2-26}, no.~1, 531.

\bibitem[Mum66]{Mumfordlectures}
David Mumford, \emph{Lectures on curves on an algebraic surface}, With a
  section by G. M. Bergman. Annals of Mathematics Studies, No. 59, Princeton
  University Press, Princeton, N.J., 1966.

\bibitem[Mum88]{Redbook}
\bysame, \emph{The red book of varieties and schemes}, Lecture Notes in
  Mathematics, vol. 1358, Springer-Verlag, Berlin, 1988.

\bibitem[Nor76]{Northcott}
D.~G. Northcott, \emph{Finite free resolutions}, Cambridge University Press,
  Cambridge, 1976, Cambridge Tracts in Mathematics, No. 71.

\bibitem[Str87]{StrommeCurves}
Stein~Arild Str{\o}mme, \emph{On parametrized rational curves in {G}rassmann
  varieties}, Space curves ({R}occa di {P}apa, 1985), Lecture Notes in Math.,
  vol. 1266, Springer, Berlin, 1987, pp.~251--272.

\bibitem[Str96]{StrommeBanach}
\bysame, \emph{Elementary introduction to representable functors and {H}ilbert
  schemes}, Parameter spaces ({W}arsaw, 1994), Banach Center Publ., vol.~36,
  Polish Acad. Sci., Warsaw, 1996, pp.~179--198.

\end{thebibliography}
\end{document}